\newtheorem{thm}{Theorem}[section]
\newtheorem{defy}[thm]{Definition}
\newtheorem{remark}[thm]{Remark}
\newcommand{\by}{\bm{y}}
\newcommand{\bz}{\bm{z}}
\newcommand{\bx}{\bm{x}}
\newcommand{\bp}{\bm{p}}
\newcommand{\bk}{\bm{k}}
\newcommand{\bj}{\bm{j}}
\newcommand{\bs}{\bm{s}}
\newcommand{\bM}{\bm{M}}
\newcommand{\bP}{\bm{P}}
\newcommand{\bmm}{\bm{m}}
\newcommand{\bell}{\bm{\ell}}
\newcommand{\blam}{\bm{\lambda}}
\newcommand{\bLam}{\bm{\Lambda}}
\newcommand{\bbR}{\mathbb{R}}
\newcommand{\bbC}{\mathbb{C}}
\newcommand{\bbZ}{\mathbb{Z}}
\newcommand{\bbQ}{\mathbb{Q}}
\newcommand{\bbM}{\mathbb{M}}
\newcommand{\bbN}{\mathbb{N}}
\newcommand{\bbT}{\mathbb{T}}
\newcommand{\calB}{\mathcal{B}}
\newcommand{\calP}{\mathcal{P}}
\newcommand{\calM}{\mathcal{M}}
\newcommand{\hF}{\hat{F}}
\newcommand{\hc}{\hat{c}}
\newcommand{\hf}{\hat{f}}
\newcommand{\tF}{\tilde{F}}
\newcommand{\tf}{\tilde{f}}
\newcommand{\tpsi}{\tilde{\psi}}
\newcommand{\tphi}{\tilde{\phi}}
\newcommand{\tPsi}{\tilde{\Psi}}
\newcommand{\tPhi}{\tilde{\Phi}}
\newcommand\tbbint{{-\mkern -16mu\int}}
\newcommand\dbbint{{-\mkern -19mu\int}}
\newcommand\bbint{
	{\mathchoice{\dbbint}{\tbbint}{\tbbint}{\tbbint}}
}
\newcommand{\Tri}{\mbox{Tri}}
\newcommand{\QP}{\mbox{QP}}
\definecolor{mgray}{rgb}{0.9,0.9,0.9}
\title{Numerical Methods and Analysis of Computing Quasiperiodic Systems
	\thanks{Submitted to ...
		\funding{
		}}
	}
\author{Kai Jiang\thanks{
		Hunan Key Laboratory for Computation and Simulation in Science and Engineering,
		Key Laboratory of Intelligent Computing and Information Processing of Ministry of Education, School of Mathematics and Computational Science, Xiangtan University, Xiangtan, Hunan, China, 411105.
		(\email{kaijiang@xtu.edu.cn}, \email{shifengli@smail.xtu.edu.cn}).}
	\and ShiFeng Li\footnotemark[2]
	\and Pingwen Zhang\thanks{
		School of Mathematics and Statistics, Wuhan University, Wuhan, 430072, School of Mathematical Sciences, Peking University, Beijing, 100871, China.
		(\email{pzhang@pku.edu.cn}). } 
	}
\newcolumntype{I}{!{\vrule width 1.0pt}}
\newlength\savedwidth
\newlength\savewidth
\begin{document}
\maketitle
	
\begin{abstract}
Quasiperiodic systems are important space-filling ordered structures, without decay and translational invariance. How to solve quasiperiodic systems accurately and efficiently is of great challenge.
A useful approach, the projection method (PM) [J. Comput. Phys., $\bm{256}$:~428, 2014], has been proposed to compute quasiperiodic systems. Various studies have demonstrated that the PM is an accurate and efficient method to solve quasiperiodic systems. However, there is a lack of theoretical analysis of PM. 
In this paper, we present a rigorous convergence analysis of the PM by establishing a mathematical framework of quasiperiodic functions and their high-dimensional periodic functions. We also give a theoretical analysis of quasiperiodic spectral method (QSM) based on this framework. Results demonstrate that PM and QSM both have exponential decay, and the QSM (PM) is a generalization of the periodic Fourier spectral (pseudo-spectral) method. Then we analyze the computational complexity of PM and QSM in calculating quasiperiodic systems. The PM can use fast Fourier transform, while the QSM cannot.
Moreover, we investigate the accuracy and efficiency of PM, QSM and periodic approximation method in solving the linear time-dependent quasiperiodic Schr\"{o}dinger equation.
\end{abstract}	
	
\begin{keywords}
Quasiperiodic systems, 
Quasiperiodic spectral method,
Projection method,
Birkhoff's ergodic theorem,
Error estimation,
Time-dependent quasiperiodic Schr\"{o}dinger equation. 
\end{keywords}
	
\begin{AMS}
42A75, 65T40, 68W40, 74S25 
\end{AMS}

\section{Introduction}


Quasiperiodic systems are a natural extension of periodic systems.
The earliest quasiperiodic system can trace back to the study of three-body problem\,\cite{poincare1889problem}. 
Many physical systems can fall into the set of quasiperiodicity, including
periodic systems, incommensurate structures, quasicrystals, many-body problems,
polycrystalline materials, and quasiperiodic quantum systems\,\cite{poincare1889problem, shechtman1984metallic, cao2018unconventional, sutton1992irrational}.
The mathematical study of quasiperiodic orders is a beautiful synthesis
of geometry, analysis, algebra, dynamic system, and number theory\,\cite{baake2013aperiodic, Lubensky1988aperiodicity}.
The theory of quasiperiodic functions, even more general almost periodic functions, has been well developed to study quasiperiodic systems in mathematics\,\cite{bohr1947almost,Besicovitch1954almost,levitan1982almost}. However, how to numerically solve quasiperiodic systems in an accurate and efficient way is still of great challenge.


Generally speaking, quasiperiodic systems, related to irrational numbers, are space-filling ordered structures, without decay nor translational invariance. 
This rises difficulty in numerically computing quasiperiodic systems. 
To study such important systems, several numerical methods have been developed.
A widely used approach, the periodic approximation method (PAM), employs a
periodic function to approximate the quasiperiodic
function\,\cite{jiang2018numerical}. 
The conventional viewpoint is that the approximation error could uniformly decay as the supercell gradually becomes large. 
However, a recent theoretical analysis has demonstrated that the error of PAM may not uniformly decrease as the calculation area increases\,\cite{jiang2022pam}.
The second method is the quasiperiodic spectral method (QSM), which approximates quasiperiodic function by a finite summation of trigonometric polynomials based on the continuous Fourier-Bohr transform\,\cite{jiang2018numerical}, also see \Cref{subsec:QSM}. 
The third approach is the projection method (PM)\,\cite{jiang2014numerical}, based on the fact that the quasiperiodic system can be embedded into a high-dimensional periodic system. Then the PM can accurately calculate the high-dimensional periodic system over a torus in a pseudo-spectral manner. Meanwhile, the PM is efficient due to the availability of fast Fourier transform (FFT). Finally, the PM obtains the quasiperiodic system by choosing a corresponding irrational slice of the high-dimensional torus by the projection matrix. 
Extensive studies have demonstrated that the PM can be used to compute quasiperiodic systems to high precision, including quasicrystals \cite{barkan2014controlled,jiang2015stability}, incommensurate quantum systems \cite{zhou2019plane, li2021numerical,gao2023pythagoras}, topological insulators \cite{wang2022effective}, and grain boundaries \cite{cao2021computing, jiang2022tilt}. However, the PM still has a lack of corresponding theoretical guarantees.                                                                                                                                                                                                                                                                                                                                                                                                                                                                                                                                                                                                                                                                                                                                                                                                                                                                                                                                                                                                                                                                                                                                                                                                                                                                                                                                                                                                                                                                                                                                           


In this work, we present a rigorous theoretical analysis of numerical methods for solving quasiperiodic systems. We establish the relationship between quasiperiodic functions and their corresponding high-dimensional periodic functions based on the idea of PM. These mathematical results provide a theoretical framework to analyze the convergence of PM, as well as QSM. We also present another error analysis framework of QSM without using high-dimensional periodic functions. These theoretical results demonstrate that both PM and QSM have exponential convergence. Moreover, we analyze the computational complexity of PM and QSM in solving quasiperiodic systems. The PM can use FFT by introducing discrete Fourier-Bohr transform, see \Cref{subsec:PM}, while the QSM cannot.
Further analysis reveals that the QSM (PM) is an extension of the periodic Fourier spectral (pseudo-spectral) method. Finally, we investigate the accuracy and efficiency of PM, QSM, and PAM to solving the linear time-dependent quasiperiodic Schr\"{o}dinger equation (TQSE).



\section{Preliminaries}
\label{sec:pre}

Before our analysis, we give some preliminaries on quasiperiodic and periodic functions in this section.

\subsection{Preliminaries of quasiperiodic functions}

Let us recall the definition of the quasiperiodic function\,\cite{levitan1982almost}.
Denote 
\begin{align*}
	\bbM^{d\times n}=\{\bM=(\bmm_1,\cdots,\bmm_n)\in\bbR^{d\times n}: \bmm_1,\cdots,\bmm_n  ~\mbox{are~} \bbQ\mbox{-linearly independent}\},
\end{align*}
and define $\bP\in \bbM^{d\times n}$ as the projection matrix.
\begin{defy}
	\label{def:quasiperiodic}
	A $d$-dimensional function $f(\bx)$ is quasiperiodic
	if there exists a continuous $n$-dimensional periodic function
	$F$ $(n\geq d)$ which satisfies
	$f(\bx)=F(\bP^T \bx)$,
	where $\bP$ is the projection matrix.
\end{defy}
In particular, when $n=d$ and $\bP$ is nonsingular, $f(\bx)$ is periodic.
When $n\rightarrow\infty$, $f$ is almost periodic function\,\cite{bohr1947almost}.
For convenience, $F$ in \Cref{def:quasiperiodic} is called the parent
function of $f$ in the following content. 
$\QP(\bbR^d)$ represents the space of all quasiperiodic functions.
In \Cref{sec:NM and analysis}, we will show that $f$ and $F$ can be uniquely determined by each other when the projection matrix $\bP$ is given.

Let $K_T=\{\bx:\bx\in\bbR^d, ~ \vert x_j\vert \leq T, ~j=1,\cdots,d\}$ be the cube in $\bbR^d$. 
The mean value $\calM\{f(\bx)\}$ of $f\in \QP(\bbR^d)$ is defined as
\begin{align*}
	\calM\{f(\bx)\}=\lim_{T\rightarrow +\infty}
	\frac{1}{(2T)^d}\int_{\bs+K_T} f(\bx)\,d\bx
	:=\bbint  f(\bx)\,d\bx,
\end{align*}
where the limit on the right side exists uniformly for all $\bs\in\bbR^d$. 
An elementary calculation shows
\begin{align}
	\calM\{e^{i\blam^T \bx}e^{-i\bm\beta^T \bx}\}
	=\begin{cases}
		1, ~~\blam=\bm\beta,\\
		0, ~~ \blam\neq \bm\beta.
	\end{cases}
	\label{eq:orthAP}
\end{align}
Correspondingly, the continuous Fourier-Bohr transform of $f(\bx)$ is
\begin{align}
	\hf_{\blam}= \calM\{f(\bx)e^{-i\blam^T \bx}\},
	\label{eq:transform-FC}
\end{align}
where $\blam\in\bbR^d$.
Denote $\bLam=\{\blam: \blam = \bP\bk,~\bk\in \bbZ^n \}$ and
the Fourier series associated with the quasiperiodic function
$f(\bx)$ can be written as
\begin{align}
	f(\bx)\sim\sum_{\bk\in \bbZ^n} \hf_{\blam_{\bk}} e^{i\blam_{\bk}^T \bx},
	\label{eq:Fourierseries}
\end{align}
where $\blam_{\bk}=\bP\bk\in\bLam$ are Fourier exponents and $\hf_{\blam_{\bk}}$ defined in \cref{eq:transform-FC} are Fourier coefficients. To simplify the notation, denote $\hf_{\bk}=\hf_{\blam_{\bk}}$. Let 
\begin{align*}
	\QP_1(\bbR^d)=\Big\{f\in\QP(\bbR^d):~ \sum_{\bk\in \bbZ^n} \vert \hf_{\bk}\vert <+\infty\Big\},
\end{align*}
with norm $\Vert f \Vert_{\mathcal{L}^{\infty}(\bbR^d)}=\sup_{\bx\in \bbR^d} \vert f(\bx) \vert$.



%

In general, the convergence of the Fourier series \cref{eq:Fourierseries}
is a challenging problem, see \cite{levitan1982almost} for some sufficient criteria. The following conclusion presents an important convergence property of quasiperiodic function.
\begin{thm}
	\label{thm:converofAP}
	(\cite{Corduneanu1988almost} Chapter 1.3)
	If the Fourier series of a quasiperiodic function is uniformly convergent, then the sum of the series is the given function.
\end{thm}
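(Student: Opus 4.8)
The plan is to let $g(\bx)$ denote the uniform sum of the Fourier series and to prove the identity $g=f$. Since $g$ is a uniform limit of trigonometric polynomials whose exponents all lie in the finitely generated module $\bLam$, it is automatically continuous and again quasiperiodic, so the difference $h:=f-g\in\QP(\bbR^d)$ is a legitimate object to which the mean value $\calM$ applies. The whole argument then reduces to showing $h\equiv 0$.

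First I would compute the Fourier coefficients of $g$ directly from the series. Because the convergence is uniform, for any fixed $\blam$ the operator $\calM$ (a uniform limit of integral averages) commutes with the infinite summation:
\begin{align*}
	\calM\{g(\bx)e^{-i\blam^T\bx}\}
	=\sum_{\bk\in\bbZ^n}\hf_{\bk}\,\calM\{e^{i(\blam_{\bk}-\blam)^T\bx}\}.
\end{align*}
Taking $\blam=\blam_{\bj}$ and invoking the orthogonality relation \cref{eq:orthAP}, every term vanishes except those with $\blam_{\bk}=\blam_{\bj}$. The $\bbQ$-linear independence of $\bmm_1,\dots,\bmm_n$ guarantees that $\bk\mapsto\blam_{\bk}=\bP\bk$ is injective, so the surviving index is unique, namely $\bk=\bj$, and we obtain $\calM\{g(\bx)e^{-i\blam_{\bj}^T\bx}\}=\hf_{\bj}$. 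Consequently $f$ and $g$ share every Fourier coefficient, i.e. $\hat{h}_{\bk}=0$ for all $\bk\in\bbZ^n$.

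It then remains to establish the uniqueness step: a continuous quasiperiodic function whose Fourier coefficients all vanish is identically zero. I would derive this from Parseval's equality $\calM\{|h(\bx)|^2\}=\sum_{\bk\in\bbZ^n}|\hat{h}_{\bk}|^2$, which forces $\calM\{|h|^2\}=0$, together with the positivity lemma that a nonnegative continuous quasiperiodic function with vanishing mean value must vanish everywhere. The latter follows because, if $|h(\bx_0)|^2>0$, continuity gives a whole neighbourhood on which $|h|^2$ stays bounded below, and the relative density of almost periods produces infinitely many, relatively dense, translates of this neighbourhood, forcing $\calM\{|h|^2\}>0$, a contradiction.

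The main obstacle is precisely this last step. Bessel's inequality $\sum_{\bk}|\hat{h}_{\bk}|^2\le\calM\{|h|^2\}$ is elementary but points the wrong way; obtaining the reverse inequality (the completeness half of Parseval) is the genuine content and is typically supplied by the Bochner--Fej\'er approximation theorem, which represents $h$ as a uniform limit of averaged partial sums built solely from its Fourier coefficients. Once that approximation theorem is in hand, vanishing coefficients immediately make every approximant zero and hence $h\equiv0$, closing the argument; by comparison the orthogonality and interchange steps above are routine.
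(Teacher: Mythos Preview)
Your argument is correct and is essentially the classical proof one finds in Corduneanu (or Bohr, Besicovitch): pass to the uniform limit $g$, use uniform convergence to interchange $\calM$ with the sum, deduce that $f-g$ has identically zero Fourier coefficients, and then invoke uniqueness. You are also right that the real weight sits in the uniqueness step, and that the clean way to obtain it is via the Bochner--Fej\'er approximation theorem (from which Parseval's completeness direction is a corollary); you flag this honestly rather than sweeping it under the rug.

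There is nothing to compare against in the paper itself: \cref{thm:converofAP} is \emph{stated without proof} and simply attributed to \cite{Corduneanu1988almost}, Chapter~1.3. Your proposal is precisely the argument that reference would supply. One small caution on logical ordering: in the paper Parseval's identity \cref{eq:Parseval} is recorded \emph{after} \cref{thm:converofAP}, so if you were to insert your proof in place you should make clear that the uniqueness/Parseval input is established independently (via Bochner--Fej\'er summation) and does not rely on \cref{thm:converofAP}, to avoid any appearance of circularity.
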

If the Fourier series of the quasiperiodic function is absolutely
convergent, it is also uniformly convergent.
Therefore, for $f\in \QP_1(\bbR^d)$, we have
\begin{align*}
	f(\bx)=\sum_{\bk\in \bbZ^n} \hf_{\bk} e^{i\blam_{\bk}^T \bx}.
\end{align*}
As a consequence, we can obtain a subspace $\QP_2(\bbR^d)$ of $\QP(\bbR^d)$
\begin{align*}
	\QP_2(\bbR^d)=\Big \{f\in\QP(\bbR^d): \calM\{\vert f\vert^2 \} <+\infty \Big \}
\end{align*}
equipped with norm
\begin{align}
	\Vert f \Vert_{\mathcal{L}^2(\bbR^d)}^2= \calM \{\vert f\vert^2 \}
	=\sum_{\bk\in \bbZ^n} \vert \hf_{\bk}\vert^2,
	\label{eq:Parseval}
\end{align}
and the inner product $(\cdot , \cdot)_{QP_2(\bbR^d)}$ 
\begin{align*}		
	(f_1, f_2)_{QP_2(\bbR^d)}=\bbint  f_1(\bx)\bar{f}_2(\bx)\,d\bx.
\end{align*}
Equality \cref{eq:Parseval} is the Parseval's identity.
Now we introduce the Hilbert space of quasiperiodic functions. Denote $\vert \bx \vert=\sum_{j=1}^{d}\vert x_j\vert $ with $\forall \bx\in\bbR^d$.
For any $m\in\bbN_0=\{m\in \bbZ:m> 0\}$, the Sobolev space $H^\alpha_{QP}(\bbR^d)$ comprises all quasiperiodic functions with partial derivatives order $\alpha \geq 1$ with respect to the inner product $(\cdot, \cdot)_{\alpha}$
	\begin{align*}
		(f_1, f_2)_{\alpha}=(f_1, f_2)_{QP_2(\bbR^d)}+ \sum_{\vert
			m\vert=\alpha}(\partial^{m}_{\bx} f_1, \partial^{m}_{\bx} f_2)_{QP_2(\bbR^d)},
	\end{align*}
	and endowed with norm
	$\Vert f \Vert_{\alpha}^2=\sum_{\bk\in \bbZ^n}(1+\vert \blam_{\bk}\vert^2)^{\alpha}\vert \hat f_{\bk} \vert^2,$
	and semi-norm
	$\vert f \vert_{\alpha}^2=\sum_{\bk\in \bbZ^n}\vert \blam_{\bk}\vert^{2\alpha}\vert \hat f_{\bk} \vert^2.$


\subsection{Preliminaries of periodic functions}

Let $\bbT^n=(\bbR/2\pi \bbZ)^n$ be the $n$-dimensional torus,
then the Fourier transform of $F(\by)$ defined on $\bbT^n$
	\begin{align}
		\hF_{\bk}
		=\frac{1}{\vert \bbT^n\vert}\int_{\bbT^n}e^{-i\bk^T\by}F(\by)\,d\by,~~\bk\in\bbZ^n,
		\label{eq:rasiedFC}
	\end{align} 
	and
	\begin{align*}
		L^{\infty}(\bbT^n)=\Big\{F(\by): \sum_{\bk\in\bbZ^n} \vert\hF_{\bk} \vert < +\infty\Big\}.
	\end{align*}
Further, denote the Hilbert space on $\bbT^n$
\begin{align*}
	L^2(\bbT^n)=\Big\{F(\by): \langle F,F\rangle < +\infty\Big\},
\end{align*}
equipped with inner product
\begin{align*}
	\langle F_1, F_2\rangle=\frac{1}{|\bbT^n|}\int_{\bbT^n}F_1\bar{F}_2\,d\by.
\end{align*}
For any integer $\alpha\geq 0$, the $\alpha$-derivative Sobolev space on $\bbT^n$ is
\begin{align*} 
	H^\alpha(\bbT^n)=\{F\in L^2(\bbT^n): \Vert F\Vert_{\alpha}<\infty \},
\end{align*}
where  
$
	\Vert F \Vert_\alpha
	=\Big(\sum_{\bk\in\bbZ^n}(1+\|\bk \|_2^{2\alpha})
	\vert \hF_{\bk}\vert^2 \Big)^{1/2},
$
with $\|\bk\|^2_2=\sum_{j=1}^{n} \vert k_{j} \vert^2$.
The semi-norm of $H^\alpha(\bbT^n)$ can be defined as
$
	\vert F \vert_\alpha
	=\Big(\sum_{\bk\in\bbZ^n}\|\bk \|_2^{2\alpha}
	\vert \hF_{\bk}\vert^2 \Big)^{1/2}.
$



\section{Algorithms}
\label{sec:alg}
 

In this paper, our purpose is to establish the theoretical analysis of QSM and PM. In this section, we introduce these algorithms before delving into the numerical analysis.
Moreover, we present the implementation framework of PM by defining the discrete Fourier-Bohr transform of quasiperiodic functions.

For an integer $N\in \bbN_0$ and a given projection matrix $\bP\in\bbM^{d\times n}$, denote
\begin{align*}
	K_N^n=\{\bk=(k_j)_{j=1}^n \in\bbZ^n: \, -N \leq  k_j < N \},
\end{align*}
and
\begin{align}
	\bLam^d_{N}=\{\blam = \bP\bk: \bk\in K_N^n \}\subset \bLam.
	\label{eq:set-trun-Lambda}
\end{align}
Obviously, the order of the set $\bLam^d_{N}$ is $\# (\bLam^d_{N})=(2N)^n$.
The finite dimensional linear subspace of $\QP(\bbR^d)$ is
\begin{align*}
	S_N=\mbox{span} 
	\{ e^{i\blam^T \bx}, ~\bx\in\bbR^d, ~\blam\in\bLam^d_{N}\}.
\end{align*}
We denote $\calP_N: \QP(\bbR^d) \mapsto S_N$ the projection operator. For a quasiperiodic function $f(\bx)\in \QP_1(\bbR^d)$ and its Fourier exponent $\blam_{\bk                                                                                                                                                                                                                                                                                                                                                                                                                                                                                                                                                                                                                                                                                                                                                                                                                                                                                                                                                                                                                                                                                                                                                                                                                                                                                                                                                                                                                                                                                                                                                                                                                                                                                                                                                                                                                                                                                                                                                                                                                                                                                                                                                                                                                                                                                                                                                                                                                                                                                                                                                                                                                                                                                                                                                                                                                                                                                                                                                                                                                                                                                                                                                                                                                                                                                                                                                                                                                                                                                                                                                                                                                                                                                                                                                                                                                                                                                                                                                                                                                                                                                                                                                                                                                                                                                                                                                                                                                                                                                                                                                                                                                                                                                                                                                                                                                                                                                                                                                                                                                                                                                                                                                                                                                                                                                                                                                                                                                                                                                                                                                                                                                                                                                                                                                                                                                                                                                                                                                                                                                                                                                                                                                                                                                                                                                                                                                                                                                                                                                                                                                                                                                                                                                                                                                                                                                                                                                                                                                                                                                                                                                                                                                                                                                                                                                                                              }\in \bLam$, we can split it into two parts
\begin{align}
	f(\bx) =\sum_{\bk \in K_N^n} \hf_{\bk} e^{i \blam_{\bk}^T \bx} +\sum_{\bk \in \bbZ^n/K_N^n} \hf_{\bk} e^{i  \blam_{\bk}^T \bx}
	=\calP_N f+(f-\calP_N f).
	\label{def:f}
\end{align}

Next, we present QSM and PM, respectively.
\subsection{Quasiperiodic spectral method (QSM)}
\label{subsec:QSM}

The QSM directly approximates quasiperiodic function $f$ by $\calP_N f$,  
\begin{align*}
f(\bx)\approx	
\calP_N f(\bx) =\sum_{\bk \in K_N^n} \hf_{\bk} e^{i \blam_{\bk}^T \bx},~~~ \bx \in \bbR^d,
\end{align*}
where the quasiperiodic Fourier coefficient $\hf_{\bk}$ is obtained by the continuous Fourier-Bohr transform \cref{eq:transform-FC}.
We will give the error analysis of QSM in \Cref{sec:error-QSM}, and describe the numerical implementation of solving quasiperiodic system in \Cref{subsubsec:QSMimplement}. Note that quasiperiodic Fourier coefficients in QSM are obtained through the continuous Fourier-Bohr transform \cref{eq:transform-FC}, resulting in the QSM cannot use FFT. A further computational complexity analysis will be presented in \Cref{subsubsec:QSMimplement}.


\subsection{Projection method (PM)}
\label{subsec:PM}

The PM embeds the quasiperiodic function $f(\bx)$ into a high-dimensional parent function $F(\by)$, then directly replace the discrete quasiperiodic Fourier coefficients by the discrete parent Fourier coefficients\,\cite{jiang2018numerical, jiang2014numerical}.
We can use the periodic Fourier spectral method to obtain the parent Fourier coefficients. Concretely, we first discretize the tours $\bbT^n$.
Without loss of generality, we consider a fundamental domain $[0,2\pi)^n$ and assume the discrete nodes in each dimension are
the same, \textit{i.e.}, $N_1=N_2=\cdots=N_n=2N$, $N\in \bbN_0$. 
The spatial discrete size $h=\pi/N$. 
The spatial variables are evaluated on the standard
numerical grid $\bbT^n_N$ with grid points $\by_{\bj} =(y_{1,j_1},
y_{2,j_2},\dots, y_{n,j_n})$, $y_{1,j_1}=j_1 h$, $y_{2,j_2}=j_2 h, \dots,
y_{n,j_n}=j_n h$, $0\leq j_1,j_2,\dots, j_n < 2N$. We define the grid function space
\begin{align*}     
\mathcal{G}_N := \{F: \bbZ^n \mapsto \bbC: ~ F~ \mbox{is}~ \bbT^n_N \mbox{-periodic} \}.
\end{align*}
Given any periodic grid functions $F,\, G\in\mathcal{G}_N$, the $\ell^2$-inner product
is defined as 
\begin{align*}
\langle F, G \rangle_N = \frac{1}{(4\pi N)^n}\sum_{\by_{\bj}\in\bbT^n_N}
F(\by_{\bj})\overline{G}(\by_{\bj}).
\end{align*}
For $\bk,\,\bell\in\bbZ^n$, we have the discrete orthogonality condition
\begin{align}
\langle e^{i\bk^T \by_{\bj} }, e^{i\bell^T \by_{\bj} } \rangle_N =
	\begin{cases}
1,~~\bk=\bell + 2N\bmm,~\bmm \in \bbZ^n,\\
0,~~\mbox{otherwise}.
\end{cases}
\label{eq:disOrth}
\end{align}
The discrete Fourier coefficient of $F\in \mathcal{G}_N$ is
\begin{align}
	\tF_{\bk} = \langle F, e^{i \bk^T \by_{\bj}} \rangle_N,~~~
	\bk \in K_N^n.
	\label{eq:parentdisFourierCoeff}
\end{align}
The PM directly takes $\tf_{\bk}=\tF_{\bk}$. We define the \textit{discrete Fourier-Bohr transform} of quasiperiodic function $f(\bm x)$ as
\begin{align}
	f(\bx_{\bj})=\sum_{\blam_{\bk}\in \bLam_N^d} \tilde{f}_{\bk}e^{i\blam_{\bk}^T \bx_{\bj}},
\label{eq:disFourierCoeff}
\end{align}
where \textit{collocation points}  $\bx_{\bj}=\bP\by_{\bj}$, $\by_{\bj}\in \bbT^n_N$.
The trigonometric interpolation of quasiperiodic function is
\begin{align}
	I_N f(\bx)=\sum_{\blam_{\bk}\in \bLam_N^d} \tf_{\bk}e^{i\blam_{\bk}^T\bx}.
	\label{eq:tri-quasi}
\end{align}
Consequently, $I_N f(\bx_j)=f(\bx_j)$.
From the implementation, the PM can use the $n$-dimensional FFT to obtain quasiperiodic Fourier coefficients by introducing the discrete Fourier-Bohr transform \cref{eq:disFourierCoeff}. The concrete computational complexity of PM for solving quasiperiodic systems will be shown in \Cref{subsubsec:PMimplement}.


%

\begin{remark} \label{rmk:methodRelation}
	From the above description, QSM and PM are generalization of the Fourier
	spectral method and Fourier pseudo-spectral method, respectively. 
	When $f(\bm x)$ is periodic, \textit{i.e.,} $n=d$ and the projection matrix $\bP\in \bbM^{d\times d}$ is nonsingular, the QSM (PM) reduces to the periodic Fourier	spectral (pseudo-spectral) method.
\end{remark}

\section{Theoretical framework}
\label{sec:NM and analysis}

From the implementation framework of PM presented in \Cref{subsec:PM}, exploring the relationship between quasiperiodic functions and their parent functions is a prerequisite for its convergence analysis.
Here, we prove that the quasiperiodic Fourier coefficients $\hf_{\bk}$ of \cref{eq:transform-FC} are equal to their parent Fourier coefficients $\hF_{\bk}$ of \cref{eq:rasiedFC}.
\begin{thm}
\label{thm:object}
For a given quasiperiodic function
\begin{align*}
	f(\bx)=F(\bP^T \bx), ~~~\bx\in\bbR^d,
\end{align*}
where $F(\by)$ is its parent function defined on the tours $\bbT^n$ and $\bP$ is the projection matrix, we have
\begin{align}
	\hf_{\bk}=\hF_{\bk},~~\bk\in\bbZ^n,
	\label{eq:object0}
\end{align}
where $\hf_{\bk}$ and $\hF_{\bk}$ are defined by \cref{eq:transform-FC} and \cref{eq:rasiedFC}, respectively.
\end{thm}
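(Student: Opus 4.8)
The plan is to read off the identity \cref{eq:object0} by expanding the parent function $F$ in its torus Fourier series and then applying the continuous Fourier--Bohr transform \cref{eq:transform-FC} term by term. The single structural fact that makes everything work is that $\bP\in\bbM^{d\times n}$ has $\bbQ$-linearly independent columns, so the map $\bk\mapsto\blam_{\bk}=\bP\bk$ is injective on $\bbZ^n$: if $\bk\neq\bell$ then $\bP(\bk-\bell)=\sum_{j}(k_j-\ell_j)\bmm_j\neq\vzero$ because $\bk-\bell\in\bbZ^n$ is a nonzero vector with rational (indeed integer) entries. This injectivity is precisely what converts the orthogonality relation \cref{eq:orthAP} for quasiperiodic exponentials into a Kronecker delta in the index $\bk$.

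First I would write $F(\by)=\sum_{\bell\in\bbZ^n}\hF_{\bell}\,e^{i\bell^T\by}$ on $\bbT^n$ and substitute $\by=\bP^T\bx$. Since $(\bP\bell)^T\bx=\bell^T\bP^T\bx$, this gives
\begin{align*}
	f(\bx)=F(\bP^T\bx)=\sum_{\bell\in\bbZ^n}\hF_{\bell}\,e^{i(\bP\bell)^T\bx}=\sum_{\bell\in\bbZ^n}\hF_{\bell}\,e^{i\blam_{\bell}^T\bx},
\end{align*}
which already has the form of the Fourier--Bohr series \cref{eq:Fourierseries}. Applying the mean $\calM\{\,\cdot\;e^{-i\blam_{\bk}^T\bx}\}$ to both sides and interchanging it with the summation, the orthogonality \cref{eq:orthAP} kills every term except those with $\blam_{\bell}=\blam_{\bk}$; by the injectivity above this forces $\bell=\bk$, so only the $\bell=\bk$ term survives and we obtain $\hf_{\bk}=\hF_{\bk}$.

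The main obstacle is the legitimacy of exchanging the mean value $\calM$ with the infinite sum, since $F$ is only assumed continuous and its Fourier series need not converge absolutely. I would bypass term-by-term estimates by the following reduction: fixing $\bk$, set $G(\by)=F(\by)e^{-i\bk^T\by}$, a continuous $2\pi$-periodic function whose torus average $\tfrac{1}{|\bbT^n|}\int_{\bbT^n}G(\by)\,d\by$ equals $\hF_{\bk}$ by \cref{eq:rasiedFC}, so it suffices to prove $\calM\{G(\bP^T\bx)\}=\tfrac{1}{|\bbT^n|}\int_{\bbT^n}G(\by)\,d\by$ for every $G\in C(\bbT^n)$. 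Both $G\mapsto\calM\{G(\bP^T\bx)\}$ and $G\mapsto\tfrac{1}{|\bbT^n|}\int_{\bbT^n}G\,d\by$ are linear functionals bounded by $\sup_{\by\in\bbT^n}\vert G(\by)\vert$, and by Stone--Weierstrass the trigonometric polynomials are dense in $C(\bbT^n)$; hence it is enough to verify the identity on a single exponential $G=e^{i\bell^T\by}$. There the torus average equals $1$ when $\bell=\vzero$ and $0$ otherwise, while $\calM\{e^{i(\bP\bell)^T\bx}\}$ equals $1$ exactly when $\bP\bell=\vzero$, i.e.\ (again by $\bbQ$-linear independence) when $\bell=\vzero$, and $0$ otherwise. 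The two functionals thus agree on all exponentials, hence on all of $C(\bbT^n)$ by density, which yields \cref{eq:object0}. If one instead assumes $f\in\QP_1(\bbR^d)$, equivalently $F\in L^{\infty}(\bbT^n)$ with absolutely summable coefficients, the Fourier series of $F$ converges uniformly (so \Cref{thm:converofAP} applies), the interchange of $\calM$ and the sum is immediate, and the more direct argument of the previous paragraph already finishes the proof.
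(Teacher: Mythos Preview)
Your proof is correct and takes a genuinely different route from the paper. Both you and the paper reduce \cref{eq:object0} to the identity $\calM\{G(\bP^T\bx)\}=\tfrac{1}{|\bbT^n|}\int_{\bbT^n}G(\by)\,d\by$ for $G(\by)=e^{-i\bk^T\by}F(\by)$, but from there the paper invokes Birkhoff's ergodic theorem together with a separately proved proposition that the translation flow $\phi^{\bx}_{\bP}(\bz)=\bz+\bP^T\bx\pmod 1$ on $\bbT^n$ is ergodic, whereas you verify the identity on single exponentials $e^{i\bell^T\by}$ via \cref{eq:orthAP} and the injectivity of $\bell\mapsto\bP\bell$, and then pass to all of $C(\bbT^n)$ by Stone--Weierstrass and the uniform boundedness of both functionals. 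Your argument is the classical Weyl equidistribution proof: it is entirely elementary (no measure theory, no ergodic theorem), and it delivers the identity directly at the specific base point $\bz=\bm{0}$ for every continuous $G$, which is exactly what is needed. The paper's ergodic route places the result in a broader dynamical framework and in principle covers merely integrable $G$, though Birkhoff alone gives the time-average identity only for almost every starting point, so one tacitly needs continuity (or unique ergodicity) to conclude at $\bz=\bm{0}$; for the continuous parent functions assumed in \Cref{def:quasiperiodic}, your approach is both sufficient and cleaner.
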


We will prove \cref{thm:object} based on the Birkhoff's ergodic theorem\,\cite{walter1982ergodic, pitt1942generalizations}.
Let us start with some basic definitions.
Let $\Omega$ be a set. A $\sigma$-algebra of $\Omega$ is a collection $\calB$ of subsets of $\Omega$ satisfying the following three conditions: (i) $\Omega\in \calB$; (ii) if $B\in \calB$, then $\Omega\backslash B\in \calB$; (iii) if $B_n\in\calB$ for $n\geq 1$, then $\bigcup^{\infty}_{n=1} B_n\in \calB$.
We call the pair $(\Omega, \calB)$ a measurable space. The Lebesgue measure on $(\Omega,
\calB)$ is a function $\mu: \calB\mapsto \bbR^+$ satisfying $\mu(\emptyset)=0$
and $\mu(\bigcup^{\infty}_{n=1} B_n)=\sum^{\infty}_{n=1} \mu(B_n)$ whenever
$\{B_n\}^{\infty}_{n=1}$ is a sequence of members of $\calB$ which are pairwise
disjoint subsets of $\Omega$. A finite measure space is a triple $(\Omega, \calB,
\mu)$ where $(\Omega, \calB)$ is a measurable space and $\mu$ is a finite measure on
$(\Omega, \calB)$. We say $(\Omega, \calB, \mu)$ is a probability space, or a
normalized measure space if $\mu(\Omega)=1$.
\begin{defy}
Suppose that $(\Omega_1, \calB_1, \mu_1)$ and $(\Omega_2, \calB_2, \mu_2)$ are probability spaces.

(i) A transformation $\phi:\Omega_1\mapsto \Omega_2$ is a measure if $\phi^{-1}(\calB_2)\subset \calB_1$;

(ii) A transformation $\phi:\Omega_1\mapsto \Omega_2$ is measure-preserving if $\phi$ is measureable and $\mu_1(\phi^{-1}B_2)=\mu_2(B_2)$, for each $B_2\in\calB_2$.
\end{defy}

\begin{defy}
Let $(\Omega, \calB, \mu)$ be a probability space. A measure-preserving transformation $\phi:\Omega\mapsto \Omega$ is called ergodic if the only member $B\in\calB$ with $\phi^{-1}B=B$ satisfying $\mu(B)=1$ or $\mu(B)=0$.
\end{defy}

\cref{lem:ergodic-condition} gives an equivalent condition of ergodicity.

\begin{lemma}
(Theorem 1.6 \cite{walter1982ergodic})
\label{lem:ergodic-condition}
Let $(\Omega, \calB, \mu)$ be a probability space and $\phi:\Omega\mapsto \Omega$ be measure-preserving mapping, then the following statements are equivalent

(i) $\phi$ is ergodic;

(ii) If $f$ is measurable and $f \circ \phi=f$ a.e., then $f$ is constant a.e.	
\end{lemma}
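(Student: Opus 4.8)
The plan is to prove the two implications separately. The direction $(ii)\Rightarrow(i)$ is immediate: given any $B\in\calB$ with $\phi^{-1}B=B$, I would apply (ii) to the indicator $f=\chi_B$. Since $f\circ\phi=\chi_{\phi^{-1}B}=\chi_B=f$, hypothesis (ii) forces $\chi_B$ to be constant a.e.; as it takes only the values $0$ and $1$, this means $\mu(B)=0$ or $\mu(B)=1$, which is exactly the definition of ergodicity.

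For $(i)\Rightarrow(ii)$, assume $\phi$ is ergodic and $f$ is measurable with $f\circ\phi=f$ a.e. The heart of the argument is a rigidity lemma converting approximate invariance into exact invariance: if $B\in\calB$ satisfies $\mu(\phi^{-1}B\,\triangle\,B)=0$, then $\mu(B)\in\{0,1\}$. I would prove this by setting $C=\bigcap_{N\geq 0}\bigcup_{n\geq N}\phi^{-n}B$; a direct computation with preimages shows $\phi^{-1}C=C$. Moreover $\mu(\phi^{-n}B\,\triangle\,B)=0$ for every $n$ (induct on $n$ using $\phi^{-n}B\,\triangle\,B\subset\bigcup_{j=0}^{n-1}\phi^{-j}(\phi^{-1}B\,\triangle\,B)$ together with measure-preservation), and combining this with $\bigcup_{n\geq N}\phi^{-n}B\,\triangle\,B\subset\bigcup_{n\geq N}(\phi^{-n}B\,\triangle\,B)$ yields $\mu(C\,\triangle\,B)=0$. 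Ergodicity then gives $\mu(C)\in\{0,1\}$, hence $\mu(B)\in\{0,1\}$.

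It remains to apply this lemma to the level sets of $f$, which I may take to be real-valued (and finite a.e.). For each integer $n\geq 0$ and $k\in\bbZ$ define $X(k,n)=f^{-1}\big([k2^{-n},(k+1)2^{-n})\big)$. Because $f\circ\phi=f$ off a null set, $\phi^{-1}X(k,n)\,\triangle\,X(k,n)$ is contained in $\{x:f(\phi x)\neq f(x)\}$ and is therefore null, so the lemma gives $\mu(X(k,n))\in\{0,1\}$. For fixed $n$ the family $\{X(k,n)\}_{k\in\bbZ}$ partitions $\Omega$ up to a null set, so exactly one index $k_n$ carries full measure. These full-measure dyadic level sets nest as $n$ grows, whence $Y=\bigcap_n X(k_n,n)$ still has measure $1$; on $Y$ the value $f(x)$ lies in the nested intervals $[k_n2^{-n},(k_n+1)2^{-n})$ of length $2^{-n}\to 0$. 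Thus $f$ equals their common limit point $c$ throughout $Y$, i.e. $f=c$ a.e.

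I expect the main obstacle to be the rigidity lemma, where the merely almost-everywhere invariance furnished by the hypothesis must be upgraded to a genuinely $\phi$-invariant set $C$ before the set-theoretic definition of ergodicity can be invoked; once each level set is known to have measure $0$ or $1$, the dyadic-approximation step that pins $f$ down to a single value is routine.
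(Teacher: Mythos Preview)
Your proof is correct. Note, however, that the paper does not supply its own proof of this lemma: it is quoted as Theorem~1.6 of Walters~\cite{walter1982ergodic} and used as a black box. Your argument is essentially the standard one found there --- the reduction of $(ii)\Rightarrow(i)$ via indicator functions, the upgrading of an almost-invariant set $B$ to a strictly invariant $C=\limsup_n \phi^{-n}B$ with $\mu(C\,\triangle\,B)=0$, and the dyadic level-set decomposition to pin $f$ to a constant --- so there is nothing to contrast with the paper's treatment beyond the fact that you have filled in the details it elected to cite.
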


The high-dimensional Birkhoff's ergodic theorem reads 

\begin{thm}
\label{thm:Birkhoff}
(\cite{pitt1942generalizations})
Let $f(\bz): \Omega \mapsto \bbC$ be integrable
and let the measure-preserving transformation $\phi^{\bx}$, $\bx\in \bbR^d$, satisfies
\begin{align*}
	\phi^{\bm{0}} \bz=\bz,~~\phi^{\bx_1+\bx_2} \bz=\phi^{\bx_1}(\phi^{\bx_2} \bz),~~\bz\in \Omega,
\end{align*}
for any $\bx_1, \bx_2 \in \bbR^d$. Then
\begin{align*}
	\bbint f(\phi^{\bx}\bz) \,d\bx=f^*(\bz)
\end{align*}
exists for almost all $\bz$ in $\Omega$. Moreover,
\begin{align*}
	\int_E f^*(\bz)\, d\bz=\int_E f(\bz)\, d\bz,
\end{align*}
where $E\subset \Omega$ is the invariant subset under $\phi^{\bm x}$.
\end{thm}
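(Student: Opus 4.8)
The plan is to prove this multiparameter ergodic theorem by the classical two-ingredient scheme: a \emph{maximal inequality} controlling the cube-average maximal operator, together with \emph{convergence on a dense subclass}, the two being combined via the Banach principle. Throughout I would write $A_T f(\bz) = \frac{1}{(2T)^d}\int_{K_T} f(\phi^{\bx}\bz)\,d\bx$ for the cube average, so that the claimed limit is $f^*(\bz) = \lim_{T\to\infty} A_T f(\bz) = \bbint f(\phi^{\bx}\bz)\,d\bx$. The cocycle property $\phi^{\bx_1+\bx_2} = \phi^{\bx_1}\phi^{\bx_2}$ together with measure-preservation makes $\{\phi^{\bx}\}$ a measurable action of $\bbR^d$ on $(\Omega,\calB,\mu)$; first I would record the joint measurability of $(\bx,\bz)\mapsto f(\phi^{\bx}\bz)$ and invoke Fubini to see that each $A_T f$ is well defined and $\calB$-measurable, and that $A_T$ is an $L^1(\Omega)$-contraction.

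The first substantive step is the maximal inequality. I would establish the weak-type $(1,1)$ bound $\mu(\{\bz: \sup_{T>0}|A_T f(\bz)| > \lambda\}) \le \frac{C_d}{\lambda}\|f\|_{L^1(\Omega)}$ with $C_d$ depending only on $d$. The clean route is the Calder\'on transference principle: for fixed $\bz$ the function $\bx \mapsto f(\phi^{\bx}\bz)$ lives on $\bbR^d$, the operators $A_T$ are precisely averages against normalized indicators of cubes, and $\sup_{T>0}|A_T|$ is dominated by the Euclidean Hardy--Littlewood maximal operator over cubes. The latter obeys the weak-$(1,1)$ inequality with a purely dimensional constant by Wiener's Vitali-type covering lemma; transference then lifts this bound to the flow with the \emph{same} constant, using measure-preservation to integrate back in $\bz$.

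The second step is convergence on a dense class. I would invoke the von Neumann mean ergodic theorem for the $\bbR^d$-action: $L^2(\Omega)$ decomposes orthogonally into the invariant functions and the closed span of the coboundary directions $g - g\circ\phi^{\bx}$, and on each piece $A_T f$ converges in $L^2$ to $Pf$, the orthogonal projection onto invariant functions (equivalently, the conditional expectation onto the invariant $\sigma$-algebra $\calI$). For invariant $f$ convergence is pointwise-trivial, and the coboundary directions are explicitly averaged down, so a.e.\ convergence holds on a dense subclass of $L^2(\Omega)$; since $\mu(\Omega)=1$ this subclass is also dense in $L^1(\Omega)$. The standard Banach-principle argument then applies: the maximal inequality forces the set of $f$ for which $A_T f$ converges a.e.\ to be closed in $L^1(\Omega)$, so a.e.\ convergence propagates from the dense subclass to every integrable $f$, with limit $f^* = Pf$, which is invariant.

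Finally, for the integral identity I would use invariance of both $f^*$ and $E$. For every $T$, Fubini and measure-preservation give $\int_E f(\phi^{\bx}\bz)\,d\mu(\bz) = \int_{\phi^{-\bx}E} f\,d\mu = \int_E f\,d\mu$, since $\phi^{-\bx}E = E$, so averaging over $\bx\in K_T$ yields $\int_E A_T f\,d\mu = \int_E f\,d\mu$. Letting $T\to\infty$, dominated convergence settles the case $f\in L^\infty$, and the general integrable case follows because both sides are $L^1$-continuous in $f$ (the right side trivially, the left because $f\mapsto f^* = Pf$ is an $L^1$-contraction) while $L^\infty$ is dense. I expect the \textbf{main obstacle} to be the maximal inequality for multidimensional cube averages: unlike the single-transformation case there is no direct rising-sun argument, so the proof genuinely rests on the Wiener covering lemma together with transference, and on the attendant joint-measurability and Fubini bookkeeping needed to make the continuous $\bbR^d$-action rigorous.
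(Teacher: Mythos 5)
The paper does not actually prove \cref{thm:Birkhoff}: it is imported verbatim from Pitt's 1942 paper (the $\bbR^d$ generalization of Birkhoff's theorem, following Wiener) and is used purely as a black box, together with \cref{cor:ergodic}, to derive \cref{thm:object}. So there is no internal proof to compare yours against; the comparison must be with the classical literature, and measured against that your outline is correct and is essentially the standard proof of this result: a weak-$(1,1)$ maximal inequality for cube averages obtained by transferring the Hardy--Littlewood maximal inequality (via Wiener's covering lemma) through the measure-preserving action; a.e.\ convergence on the dense class furnished by the von Neumann decomposition, namely invariant functions (where $A_T f = f$ a.e.\ by Fubini) plus coboundaries $g - g\circ\phi^{\bx_0}$ with $g$ bounded, for which the symmetric-difference estimate $\vert K_T \,\triangle\, (K_T + \bx_0)\vert/(2T)^d \to 0$ gives convergence to $0$; the Banach principle to pass to all of $L^1$; and the integral identity on invariant $E$ from $\int_E A_T f \,d\mu = \int_E f \,d\mu$ together with dominated convergence on $L^\infty$ and $L^1$-density. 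Wiener's 1939 proof of the $\bbR^d$ theorem rests on exactly this covering-lemma mechanism, and Pitt's paper generalizes the covering argument to wider families of averaging sets; your Calder\'on-transference phrasing is a later but cleaner packaging that yields the same dimensional constant, so the route is the classical one in modern dress.

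Two small points should be tightened, though neither threatens the plan. First, joint measurability of $(\bx,\bz)\mapsto f(\phi^{\bx}\bz)$ is a hypothesis on the action, not a consequence of the cocycle identity $\phi^{\bx_1+\bx_2} = \phi^{\bx_1}\phi^{\bx_2}$ as your wording suggests; it must be assumed (and holds trivially in this paper's application, where $\phi^{\bx}_{\bP}$ is a jointly continuous translation of $\bbT^n$). Second, the invariance of the limit $f^*$, which you use implicitly when identifying $f^* = Pf$ and when restricting to the invariant set $E$, deserves its one-line proof: $A_T f(\phi^{\by}\bz) - A_T f(\bz)$ is controlled by the same symmetric-difference estimate, whence $f^*\circ\phi^{\by} = f^*$ a.e.\ for each $\by$.
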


In this work, $\Omega=E=\bbT^n$. Given a projection matrix $\bm P=(\bm p_1, \bm p_2, ..., \bm p_n)\in\bbM^{d\times n}$, denote the parameterized translation
\begin{align}
\phi^{\bx}_{\bP}(z_1,\cdots,z_n)
=(z_1+\bp_1^T \bx,\cdots,z_n+\bp_n^T\bx)~ (mod~1),
\label{eq:defT}
\end{align}
where $``mod~ 1"$ means that each coordinate remains its fractional part.
\cref{cor:ergodic} will show that $\phi^{\bx}_{\bP}$ is ergodic in probability space $(\bbT^n, \calB, \mu)$ when $\mu$ is Lebesgue measure.

\begin{proposition}
\label{cor:ergodic}
If $\bP\in\bbM^{d\times n}$,
then the parameterized translation $\phi^{\bx}_{\bP}$ defined by \cref{eq:defT}
is ergodic with respect to Lebesgue measure.
\end{proposition}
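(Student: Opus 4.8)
The plan is to verify ergodicity through the Fourier-analytic characterization of invariant functions. Since $\phi^{\bx}_{\bP}$ is a measure-preserving $\bbR^d$-action on $(\bbT^n,\calB,\mu)$, the analogue of \cref{lem:ergodic-condition} for this action states that the action is ergodic if and only if every measurable $g$ with $g\circ\phi^{\bx}_{\bP}=g$ almost everywhere for all $\bx\in\bbR^d$ is constant almost everywhere. I would first record that $\phi^{\bx}_{\bP}$ is indeed measure-preserving: each map is a translation of the torus, which leaves Lebesgue measure invariant, and the group relations in \cref{eq:defT} are immediate from the definition. It therefore suffices to show that the only invariant $L^2(\bbT^n)$ functions are the constants.

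To this end I would take such an invariant $g\in L^2(\bbT^n)$ and expand it in its Fourier series $g(\bz)=\sum_{\bk\in\bbZ^n}\hat{g}_{\bk}\,e^{2\pi i\bk^T\bz}$, using the mod-$1$ normalization consistent with \cref{eq:defT}. Applying the translation gives $g(\phi^{\bx}_{\bP}\bz)=\sum_{\bk\in\bbZ^n}\hat{g}_{\bk}\,e^{2\pi i(\bP\bk)^T\bx}\,e^{2\pi i\bk^T\bz}$, since $\bk^T\bP^T\bx=(\bP\bk)^T\bx$. Matching Fourier coefficients in the identity $g\circ\phi^{\bx}_{\bP}=g$ forces $\hat{g}_{\bk}\big(e^{2\pi i(\bP\bk)^T\bx}-1\big)=0$ for every $\bk\in\bbZ^n$ and every $\bx\in\bbR^d$. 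Hence, whenever $\hat{g}_{\bk}\neq 0$, the phase $e^{2\pi i(\bP\bk)^T\bx}$ must equal $1$ for all $\bx\in\bbR^d$.

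The last step is where the hypothesis $\bP\in\bbM^{d\times n}$ enters. If $\bP\bk\neq\bo$ one can pick $\bx\in\bbR^d$ with $(\bP\bk)^T\bx=\tfrac12$, making the phase equal to $-1$; therefore $e^{2\pi i(\bP\bk)^T\bx}=1$ for all $\bx$ can hold only when $\bP\bk=\bo$, that is $\sum_{j=1}^n k_j\bp_j=\bo$ with $\bk\in\bbZ^n$. Clearing denominators, this is a $\bbQ$-linear relation among the columns $\bp_1,\dots,\bp_n$, so the $\bbQ$-linear independence built into the definition of $\bbM^{d\times n}$ forces $\bk=\bo$. Consequently $\hat{g}_{\bk}=0$ for all $\bk\neq\bo$ and $g$ is constant almost everywhere, which establishes ergodicity.

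I expect the only genuinely delicate point to be the reduction in the first paragraph: \cref{lem:ergodic-condition} is phrased for a single transformation, whereas here invariance must be imposed simultaneously for the whole $\bbR^d$-family $\{\phi^{\bx}_{\bP}\}$, matching the invariant set $E$ that appears in \cref{thm:Birkhoff}. Once the characterization is correctly stated for the action, the remaining Fourier computation and the appeal to $\bbQ$-linear independence are routine.
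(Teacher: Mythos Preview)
Your proposal is correct and follows essentially the same approach as the paper: expand an invariant function in a Fourier series, match coefficients after applying the translation, and use the $\bbQ$-linear independence of the columns of $\bP$ to force all nonzero-mode coefficients to vanish. Your existence step for a bad $\bx$ (choosing $(\bP\bk)^T\bx=\tfrac12$) is in fact cleaner than the paper's countability remark, and your explicit acknowledgment that \cref{lem:ergodic-condition} must be applied to the full $\bbR^d$-action rather than a single map is exactly right.
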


\begin{proof}
The parameterized translation $\phi^{\bx}_{\bP}$ is measure-preserving with respect to Lebesgue measure $\mu$.
The torus $\bbT^n$ is an invariant set under the translation $\phi^{\bx}_{\bP}$ since $\phi^{\bx}_{\bm P}(\bm z)\in \bbT^n$ for each $\bz\in\bbT^n$.
Let $\chi$ be a bounded measurable function invariant under $\phi^{\bx}_{\bP}$, for example, the characteristic function of an invariant set $\bbT^n$. Then, we have
\begin{align}
	\chi_{_{\bbT^n}}(\phi^{\bx}_{\bP}(\bz))
	=\chi_{_{(\phi^{\bx}_{\bP})^{-1}({\bbT^n})}}(\bz)
	=\chi_{_{\bbT^n}}(\bz).
	\label{eq:chi-inv}
\end{align}
Without causing confusion, \Cref{eq:chi-inv} can be rewritten as 
$\chi(\phi^{\bx}_{\bP}(\bz))=\chi(\bz)$. Considering the Fourier expansion of $\chi$
\begin{align*}
	\chi(\bz)=\sum_{\bk\in\bbZ^n}\hat{\chi}_{_{\bk}} e^{i\bk^T \bz},
\end{align*}
we have
\begin{align*}
	\chi(\phi^{\bx}_{\bP}(\bz))
	=\sum_{\bk\in\bbZ^n}\hat{\chi}_{_{\bk}}
	e^{i\bk^T (z_1+\bp_1^T \bx,\cdots,z_n+\bp_n^T\bx)}
	=\sum_{\bk\in\bbZ^n}\hat{\chi}_{_{\bk}}
	e^{i\bk^T (\bp_1^T \bx,\cdots,\bp_n^T\bx)}
	e^{i\bk^T \bz}.
\end{align*}
Due to the $\phi^{\bx}_{\bP}$-invariance of $\chi$ and the uniqueness of Fourier
coefficients $\hat{\chi}_{_{\bk}}$, we can obtain
\begin{align*}
	\hat{\chi}_{_{\bk}}=\hat{\chi}_{_{\bk}}
	e^{i\bk^T (\bp_1^T \bx,\cdots,\bp_n^T\bx)},
\end{align*}
\textit{i.e.,}
\begin{align*}
	\hat{\chi}_{_{\bk}}(1- e^{i\bk^T (\bp_1^T \bx,\cdots,\bp_n^T\bx)})=0.
\end{align*}
This means that $\hat{\chi}_{_{\bk}}=0$ or
\begin{align}
	\bk^T (\bp_1^T \bx,\cdots,\bp_n^T\bx)
	=(\bP\bk)^T \bx :=m \in 2\pi\bbZ.
	\label{eq:proof-tmp}
\end{align}
Since $\bp_1,\cdots,\bp_n$ are rationally independent, then for $\bk\neq \bm{0}$
and $m\in 2\pi\bbZ$, the solution $\bx$ of \cref{eq:proof-tmp} is
countable at most. Obviously, there exists $\bx_0\in\bbR^d$ such that
$(\bP\bk)^T \bx_0\notin 2\pi \bbZ$ is true for $\bk\neq \bm{0}$, then
$\hat{\chi}_{_{\bk}}=0$. Therefore, $\chi$ is a constant outside of a set measure zero which means $\phi^{\bx}_{\bP}$ is ergodic from \cref{lem:ergodic-condition}.
\end{proof}

\textbf{The proof of \cref{thm:object}.}
\begin{proof}
From the definitions of $\hf_{\bk}$ and $\hF_{\bk}$, \Cref{eq:object0} is equivalent to
\begin{align*}
	\bbint e^{-i\blam_{\bk}^T \bx}f(\bx)\,d\bx=\frac{1}{\vert \bbT^n\vert}\int_{\bbT^n}e^{-i\bk^T\by}F(\by)\,d\by,
\end{align*}
\textit{i.e.,} we need to prove that
\begin{align}
	\bbint e^{-i (\bP\bk)^T \bx}F(\bP^T \bx)\,d\bx=\frac{1}{\vert \bbT^n\vert}\int_{\bbT^n}e^{-i\bk^T\by}F(\by)\,d\by.
	\label{eq:object-tmp}
\end{align}
Denote
$
	G(\by)=e^{-i\bk^T\by}F(\by).
$
\Cref{eq:object-tmp} can be rewritten as
\begin{align}
	\bbint G(\bP^T \bx)\,d\bx=\frac{1}{\vert \bbT^n\vert}\int_{\bbT^n}G(\by)\,d\by.
	\label{eq:object}
\end{align}
According to the parameterized translation $\phi^{\bx}_{\bP}$ defined in \cref{eq:defT}, \Cref{eq:object} is equivalent to
\begin{align}
	\bbint G(\phi^{\bx}_{\bP}({\bm 0}))\,d\bx=\frac{1}{\vert \bbT^n\vert}\int_{\bbT^n}G(\by)\,d\by.
	\label{eq:object-2}
\end{align}
Applying the ergodicity of $\phi^{\bx}_{\bP}$ proved in \cref{cor:ergodic} and 
\cref{thm:Birkhoff}, \Cref{eq:object-2} is true. The proof of \cref{thm:object} is completed.
\end{proof}

We take a one-dimensional quasiperiodic function as an example to demonstrate \cref{thm:object}, which can be embedded into a two-dimensional periodic system, as shown in \cref{fig:PMmody1y2v1}. In \cref{fig:slice}, we lift the definition area (blue line) of one-dimensional quasiperiodic function to two-dimensional periodic lattice as an irrational line by a projection matrix $\bm{P}=(1,\sqrt{3})$. Then the irrational line can be reduced to a two-dimensional unit cell by modulo arithmetic due to the two-dimensional periodicity, as shown in \cref{fig:mody1} and \cref{fig:mody1y2}. The irrational slice is infinite, these moduled lines become dense in the two-dimensional unit cell. Therefore, as \cref{thm:object} states, the one-dimensional quasiperiodic Fourier coefficient can be replaced by the two-dimensional parent Fourier coefficient.

\begin{figure}[!htbp]
\centering
\subfigure[The line $\bm{P}^T x$ embedded in a two-dimensional periodic lattice]{
	\label{fig:slice}
	\includegraphics[width=1.4in]{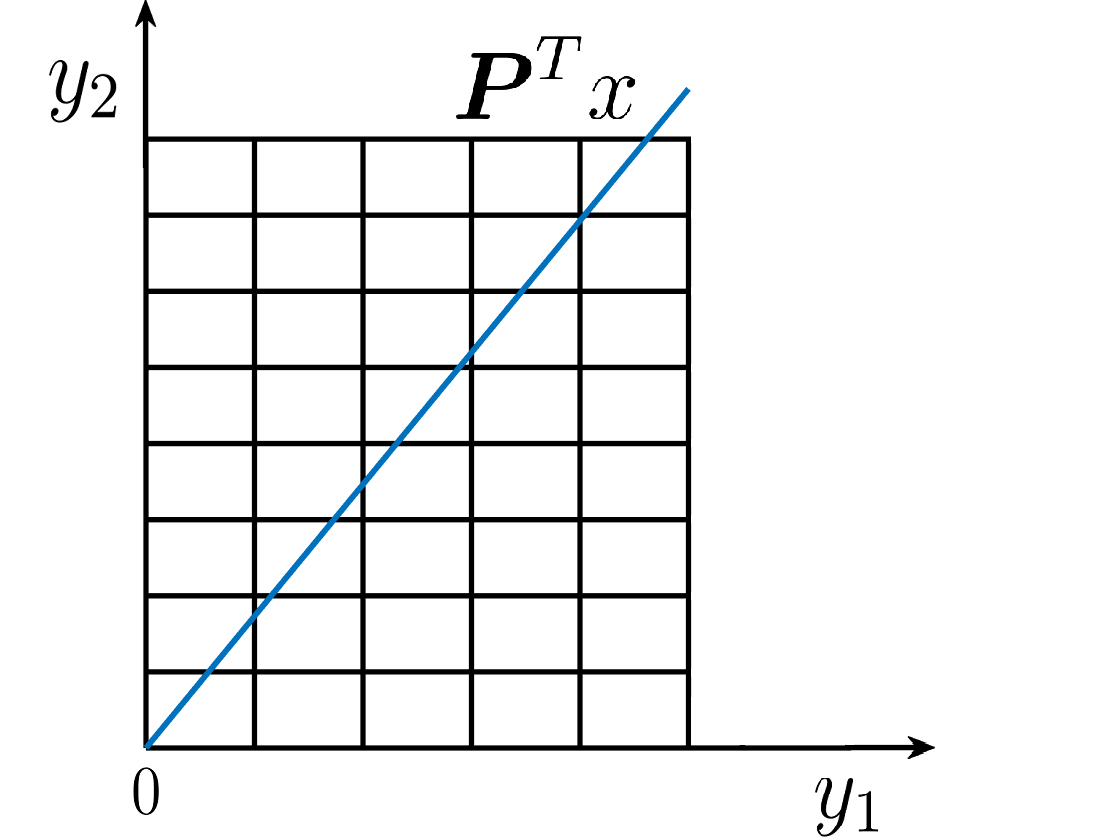}}~~~
\subfigure[Step 1: Modulo $\bm{P}^T x$ along $y_1$-axis]{
	\label{fig:mody1}
	\includegraphics[width=1.4in]{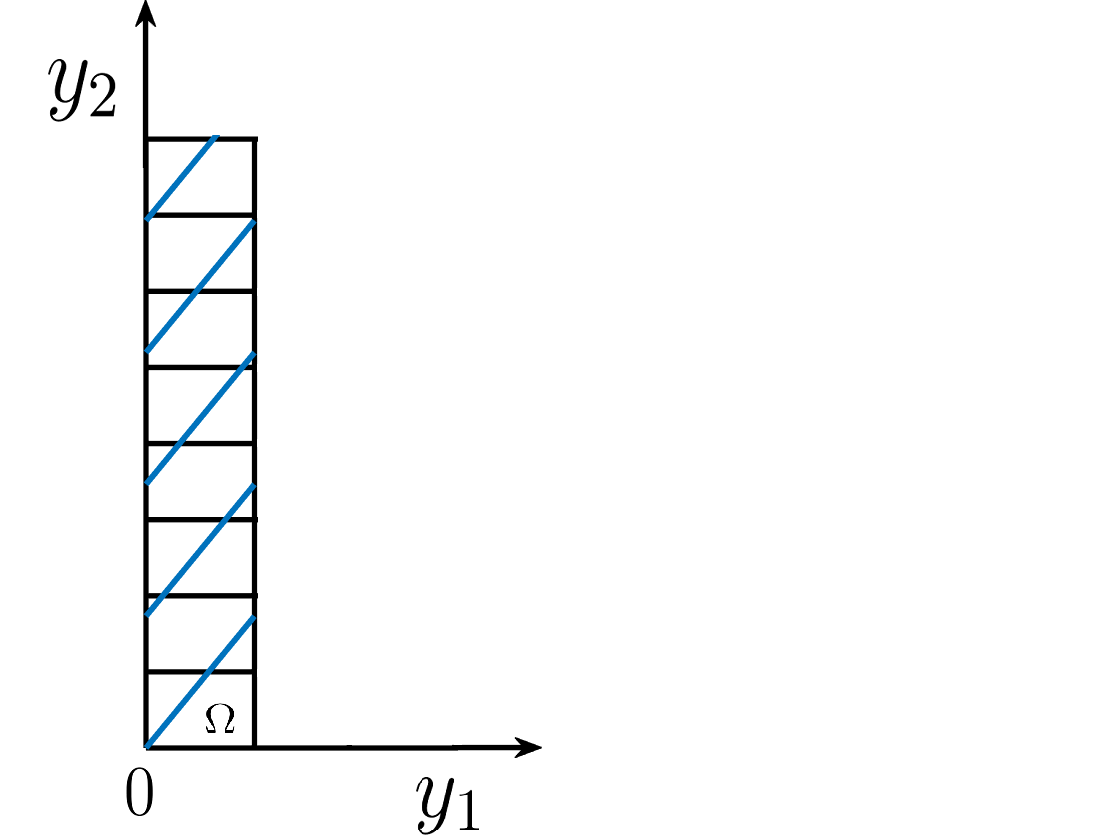}}~~
\subfigure[Step 2: Modulo $\bm{P}^T x$ along $y_2$-axis after Step 1]{
	\label{fig:mody1y2}
	\includegraphics[width=1.4in]{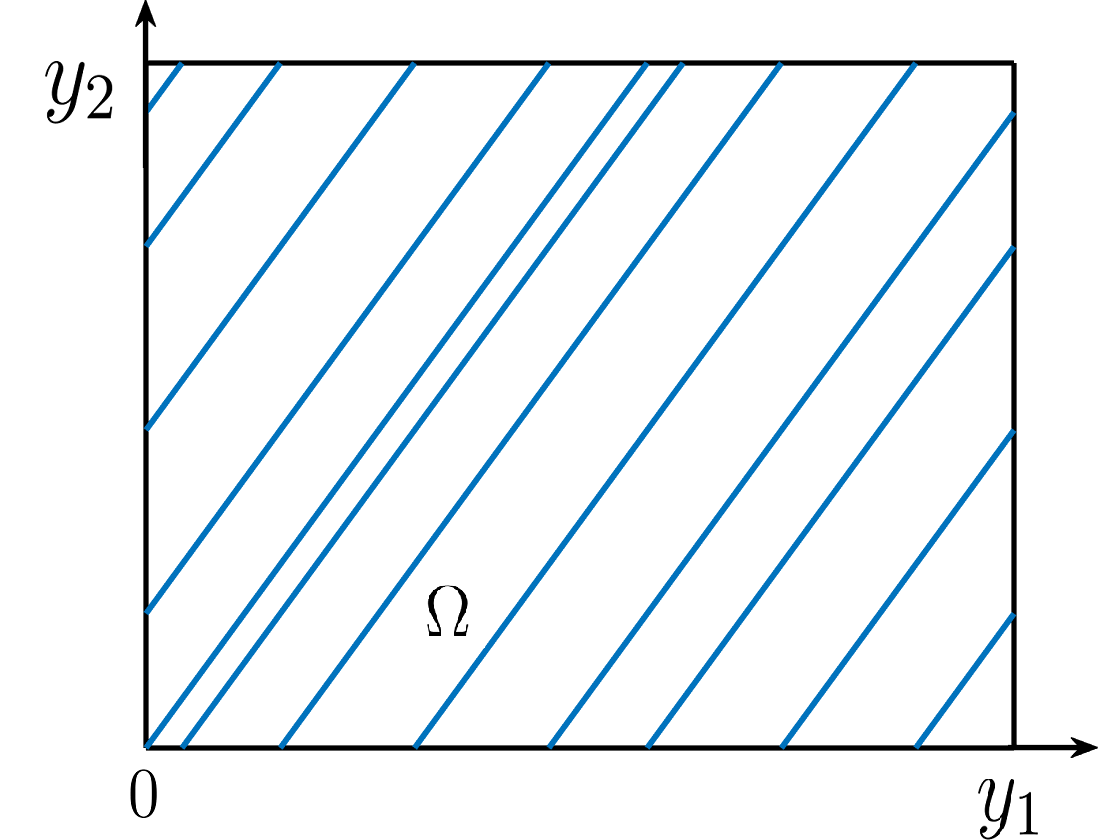}}
\caption{The process of modulo a two-dimensional irrational slice $\bm P^T x$ where $\bm P=(1,\sqrt{3})$, $x\in\mathbb{R}$.}
\label{fig:PMmody1y2v1}
\end{figure}


Applying \cref{thm:object}, we have the following two corollaries.

\begin{corollary}
	\label{cor:uniqueness}
	Quasiperiodic function $f(\bx)$ and its parent function are uniquely determined each other when the projection matrix $\bP$ is given.
\end{corollary}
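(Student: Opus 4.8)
The plan is to establish the asserted bijection in two directions, with Theorem \ref{thm:object} serving as the linchpin for the nontrivial direction. One direction is immediate: given a parent function $F$ and the fixed projection matrix $\bP$, Definition \ref{def:quasiperiodic} defines $f(\bx)=F(\bP^T\bx)$ pointwise as the composition of $F$ with the linear map $\bx\mapsto\bP^T\bx$, so the assignment $F\mapsto f$ is well-defined and single-valued. All the work lies in the converse: that $f$ together with $\bP$ pins down $F$ uniquely.

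For the converse I would argue by comparing Fourier data. First I would observe that the Fourier–Bohr coefficients $\hf_{\bk}=\calM\{f(\bx)e^{-i\blam_{\bk}^T\bx}\}$ depend only on $f$ and on the frequencies $\blam_{\bk}=\bP\bk$, hence only on $f$ and $\bP$; no parent function enters their computation. Because $\bP\in\bbM^{d\times n}$ has $\bbQ$-linearly independent columns, the map $\bk\mapsto\bP\bk$ is injective on $\bbZ^n$, so each index $\bk$ is unambiguously recovered from its frequency $\blam_{\bk}$ and the family $\{\hf_{\bk}\}_{\bk\in\bbZ^n}$ is intrinsically determined by $f$ and $\bP$.

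Next I would suppose $F_1$ and $F_2$ are two continuous periodic functions on $\bbT^n$ with $f(\bx)=F_1(\bP^T\bx)=F_2(\bP^T\bx)$. Applying Theorem \ref{thm:object} to each gives $\widehat{(F_1)}_{\bk}=\hf_{\bk}=\widehat{(F_2)}_{\bk}$ for every $\bk\in\bbZ^n$, so $F_1$ and $F_2$ share all their torus Fourier coefficients. By completeness of the exponential system $\{e^{i\bk^T\by}\}_{\bk\in\bbZ^n}$ in $L^2(\bbT^n)$, this forces $F_1=F_2$ almost everywhere, and since both are continuous, $F_1\equiv F_2$. Combined with the easy direction, $f$ and $F$ determine each other uniquely once $\bP$ is fixed.

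The main obstacle — indeed the only place regularity is used — is the last step, upgrading equality of Fourier coefficients to equality of functions. This rests on the completeness of the Fourier system on $\bbT^n$ together with the promotion of almost-everywhere equality to pointwise equality, which is legitimate precisely because the parent function is continuous, as built into Definition \ref{def:quasiperiodic}. Everything else reduces to the coefficient identification already supplied by Theorem \ref{thm:object}, so no further estimates are needed.
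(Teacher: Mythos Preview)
Your proof is correct and follows essentially the same route as the paper: both directions argue exactly as you do, invoking Theorem~\ref{thm:object} to match all torus Fourier coefficients of two candidate parent functions and then concluding $F_1\equiv F_2$. The only cosmetic difference is that the paper cites a uniqueness theorem for Fourier series from \cite{levitan1982almost} at the last step, whereas you spell out the $L^2(\bbT^n)$ completeness plus continuity argument directly; the added remark on injectivity of $\bk\mapsto\bP\bk$ is a nice clarification but not logically needed here.
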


\begin{proof}
	On the one hand, when the parent function and the projection matrix $\bP$ are given, the quasiperiodic function $f(\bx)$ is obviously unique.
	
	On the other hand, we prove that when the projection matrix $\bP$ is given, $f(\bx)$ has a unique parent function. Assume that there exist two distinct parent functions $F(\by)$ and $G(\by)$ such that
	\begin{align*}
		f(\bx)=F(\bP^{T}\bx),~~f(\bx)=G(\bP^{T}\bx).
	\end{align*}
	From \cref{thm:object}, we can obtain $\hF_{\bk}=\hf_{\bk}=\hat G_{\bk},~ \bk\in\bbZ^n$, where $\hF_{\bk}$ and $\hat G_{\bk}$ are obtained by the continuous Fourier-Bohr transform, respectively.
	According to the uniqueness theorem\,\cite{levitan1982almost}, then it follows that $F(\by)\equiv G(\by)$.
\end{proof}

Note that the uniqueness theorem in Bohr's work states that the quasiperiodic function is uniquely determined by quasiperiodic Fourier coefficients, which are obtained by the continuous Fourier-Bohr transform \cite{bohr1947almost}. In contrast, \cref{cor:uniqueness} states the uniqueness property that arises from the relation between the quasiperiodic function and its parent function.
	


Furthermore, we can establish an isomorphism relation between quasiperiodic function space and its parent function space.
Denote
\begin{align*}
	\Tri(\bbT^n)=\Big\{	F(\by)=\sum_{\bk\in\bbZ^n}\hc_{\bk}e^{i\bk^T \by},~\by\in\bbT^n:\sum_{\bk\in\bbZ^n} \vert \hc_{\bk}\vert<\infty\Big\}.
\end{align*}
For a given projection matrix $\bP\in\bbM^{d\times n}$, we define the subspace of $\QP(\bbR^d)$
\begin{align*}
	W_{\bP}(\bbR^d)=\{f(\bx)\in\bbC(\bbR^d): f(\bx)=F(\bP^T\bx),~F\in \Tri(\bbT^n),~\bP\in \bbM^{d\times n}\}.
\end{align*}
Define a mapping $\varphi_{\bP}: \Tri(\bbT^n)\mapsto W_{\bP}(\bbR^d)$, then we can easily prove that
$\varphi_{\bP}$ is isomorphic from \cref{cor:uniqueness}.





\begin{corollary}
	\label{pro:relation-space}
	For a given function $f(\bx)\in\QP(\bbR^d)$, $F(\by)$ is its parent function, we have the following properties
	
	(i) $F(\by)\in L^{\infty}(\bbT^n)$ if and only if $f(\bx)\in\QP_1(\bbR^d)$.
	
	(ii) $F(\by)\in L^2(\bbT^n)$ if and only if $f(\bx)\in\QP_2(\bbR^d)$.
	
\end{corollary}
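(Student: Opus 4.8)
The plan is to reduce both equivalences to the coefficient identity $\hf_{\bk}=\hF_{\bk}$ established in \cref{thm:object}, so that membership in the quasiperiodic spaces becomes a literal restatement of membership in the corresponding periodic spaces. No new estimate is needed beyond \cref{thm:object} and the two Parseval identities already recorded in the preliminaries.

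For part (i), I would observe that the defining conditions are both summability conditions on the absolute values of Fourier coefficients: $f\in\QP_1(\bbR^d)$ means $\sum_{\bk\in\bbZ^n}\vert\hf_{\bk}\vert<\infty$, while $F\in L^{\infty}(\bbT^n)$ means $\sum_{\bk\in\bbZ^n}\vert\hF_{\bk}\vert<\infty$. Since \cref{thm:object} gives $\hf_{\bk}=\hF_{\bk}$ termwise, the two series are identical, and hence one converges if and only if the other does. This settles (i) immediately, with no further calculation.

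For part (ii), I would invoke Parseval's identity on each side to convert the $L^2$-type conditions into $\ell^2$ sums of the Fourier coefficients. On the quasiperiodic side, \cref{eq:Parseval} gives $\calM\{\vert f\vert^2\}=\sum_{\bk\in\bbZ^n}\vert\hf_{\bk}\vert^2$, so $f\in\QP_2(\bbR^d)$ if and only if this sum is finite. On the torus, the orthonormality of $\{e^{i\bk^T\by}\}$ yields $\langle F,F\rangle=\sum_{\bk\in\bbZ^n}\vert\hF_{\bk}\vert^2$, so $F\in L^2(\bbT^n)$ if and only if that sum is finite. Applying $\hf_{\bk}=\hF_{\bk}$ once more shows the two sums coincide, giving the equivalence.

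Since both parts follow essentially verbatim from \cref{thm:object} together with the two Parseval identities, I do not expect a genuine obstacle. The only point requiring care is that the quasiperiodic-side identity \cref{eq:Parseval} be available for every $f\in\QP_2(\bbR^d)$, not merely for finite trigonometric sums; but this is exactly how the space $\QP_2(\bbR^d)$ and its norm were defined in the preliminaries, so it may be used directly.
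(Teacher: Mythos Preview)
Your proposal is correct and follows essentially the same approach as the paper: both reduce the equivalences to the coefficient identity $\hf_{\bk}=\hF_{\bk}$ from \cref{thm:object}, so that the membership conditions on each side become the same summability statement. The paper's proof takes a slightly longer route---explicitly constructing $g(\by)=\sum_{\bk}\hf_{\bk}e^{i\bk^T\by}$ and invoking \cref{cor:uniqueness} to identify $F$ with $g$---but since the spaces $\QP_1$, $\QP_2$, $L^{\infty}(\bbT^n)$, $L^2(\bbT^n)$ are all defined here purely via their Fourier coefficients, your direct comparison is sufficient and a bit cleaner.
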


\begin{proof}
	For $f(\bm x)\in \QP_1(\bbR^d)$, we have
	$
			f(\bx) =\sum_{\bk\in \bbZ^{n}}\hf_{\bk} e^{i(\bP\bk)^T \bx}.
	$
	Denote the periodic function
$
			g(\by)=
			\sum_{\bk\in \bbZ^{n}} \hf_{\bk}
			e^{i  \bk^T\by}.
$
	Obviously, $f(\bx)=g(\bP^{T}\bx)$, \textit{i.e.,} $g(\by)$ is the parent function of $f(\bx)$. Applying \cref{cor:uniqueness} and \cref{thm:object} leads to
		\begin{align}
			F(\by)=g(\by)=
			\sum_{\bk\in \bbZ^{n}} \hF_{\bk}
			e^{i  \bk^T\by},
			\label{eq:perFourierSeries}
		\end{align}
the Fourier coefficient $\hF_{\bk}$ is calculated by \cref{eq:rasiedFC} and the Fourier series of the parent function $F(\by)$ is convergent, \textit{i.e.,} $F(\by)\in L^{\infty}(\bbT^n)$.
	Similarly, we can prove that the conclusion (i) is sufficient. 
	The conclusion (ii) can be proved similarly.
\end{proof}







Applying the Parseval's equality \cref{eq:Parseval} and \cref{pro:relation-space}, for any $F_1,~F_2\in \Tri(\bbT^n)$ and $f_1,~f_2\in W_{\bP}(\bbR^d)$, we have
\begin{align*}
\Vert F\Vert_{L^2}^2=\sum_{\bk\in\bbZ^n}\vert \hc_{\bk}\vert^2, ~~\Vert f\Vert_{\mathcal{L}^2(\bbR^d)}^2=\sum_{\bk\in\bbZ^n}\vert \hc_{\bk} \vert^2.
\end{align*}
Therefore, $\Vert f\Vert_{\mathcal{L}^2(\bbR^d)}=\Vert \varphi_{\bP} F\Vert_{\mathcal{L}^2(\bbR^d)}=\Vert F\Vert_{L^2}$,
\textit{i.e.,} $\varphi_{\bP}$ is an isometric mapping in the sense of $\mathcal{L}^2(\bbR^d)$. 
The isomorphic mapping $\varphi_{\bm P}$ is a useful tool for error estimates of QSM and PM, see \cref{thm:truncation error-2} and \cref{thm:pm error-2}, respectively.


\section{Error estimate}
\label{sec:error}

\subsection{Error analysis of QSM}
\label{sec:error-QSM}


The error analysis of QSM is built on the relation between the quasiperiodic function and its parent function. Therefore, we first give the truncation error of periodic Fourier spectral method\,\cite{C2006Spectral}.

\begin{lemma}
	\label{lem:trun-periodic}
	For each $F\in H^{\alpha}(\bbT^n)$.
	There exists a constant $C$, independent of $F$ and $N$, such that
	\begin{align*}
		\Vert \calP_N F-F \Vert_{L^2}\leq C N^{\mu-\alpha}\vert F\vert_{\alpha}. 
	\end{align*}
\end{lemma}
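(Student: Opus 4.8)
The plan is to exploit the fact that $\calP_N$ acts as the orthogonal truncation onto the Fourier modes indexed by $K_N^n$, so that the error $F-\calP_N F$ is precisely the tail of the Fourier series and Parseval's identity converts its norm into a sum over the discarded frequencies. First I would write
\begin{align*}
	F - \calP_N F = \sum_{\bk \in \bbZ^n \setminus K_N^n} \hF_{\bk}\, e^{i\bk^T \by},
\end{align*}
and use the orthonormality of $\{e^{i\bk^T\by}\}$ on $\bbT^n$ to obtain
\begin{align*}
	\Vert F - \calP_N F\Vert_{L^2}^2 = \sum_{\bk \in \bbZ^n \setminus K_N^n} \vert\hF_{\bk}\vert^2.
\end{align*}

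The key geometric observation is that membership in $K_N^n$ is governed by the cutoff $-N \le k_j < N$, so any $\bk \notin K_N^n$ satisfies $\max_j\vert k_j\vert \ge N$ and hence $\Vert \bk\Vert_2 \ge N$. This allows me to insert the factor $1 = N^{-2\alpha}N^{2\alpha} \le N^{-2\alpha}\Vert\bk\Vert_2^{2\alpha}$ into each tail term:
\begin{align*}
	\sum_{\bk \notin K_N^n} \vert\hF_{\bk}\vert^2 \le N^{-2\alpha} \sum_{\bk \notin K_N^n} \Vert\bk\Vert_2^{2\alpha}\vert\hF_{\bk}\vert^2 \le N^{-2\alpha}\, \vert F\vert_{\alpha}^2,
\end{align*}
where the last bound simply extends the sum to all of $\bbZ^n$ and recognizes the $H^{\alpha}(\bbT^n)$ seminorm. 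Taking square roots yields the estimate with constant $C=1$ in the case that the left-hand norm is the $L^2$ norm, i.e. the exponent $\mu=0$. To reproduce the general factor $N^{\mu-\alpha}$ appearing in the statement, I would instead measure the error in the $H^\mu$ norm and split $\Vert\bk\Vert_2^{2\mu} = \Vert\bk\Vert_2^{2\alpha}\,\Vert\bk\Vert_2^{2(\mu-\alpha)}$; since $\mu-\alpha\le 0$ and $\Vert\bk\Vert_2\ge N$ on the tail, the second factor is bounded by $N^{2(\mu-\alpha)}$, while the additive term in the full norm is absorbed into $C$ (legitimate because $\Vert\bk\Vert_2\ge N\ge 1$). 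The resulting constant $C$ depends only on this absorption and is therefore independent of both $F$ and $N$.

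The argument is essentially routine; the only point requiring genuine care is the mismatch between the box-shaped index set $K_N^n$ (an $\ell^{\infty}$ constraint on the multi-index) and the $\ell^2$-based power weights in the seminorm $\vert F\vert_{\alpha}$. I would therefore justify the implication $\max_j\vert k_j\vert \ge N \Rightarrow \Vert\bk\Vert_2\ge N$ explicitly, since it is exactly this frequency lower bound on the discarded modes that produces the decay factor. Everything else follows from Parseval's identity and the monotonicity of the power weight $t\mapsto t^{2\alpha}$.
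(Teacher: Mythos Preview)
Your argument is correct and is precisely the standard proof that appears in the reference the paper cites; the paper itself does not supply a proof of this lemma but simply attributes it to Canuto~\emph{et~al.}~\cite{C2006Spectral}. Your handling of the undefined exponent $\mu$ is also appropriate: the paper only ever invokes the lemma with $\mu=0$ (see the proofs of \cref{thm:truncation error-2} and \cref{thm:pm error-2}), and your extension to general $0\le\mu\le\alpha$ via the splitting $\Vert\bk\Vert_2^{2\mu}=\Vert\bk\Vert_2^{2\alpha}\Vert\bk\Vert_2^{2(\mu-\alpha)}$ is exactly how the cited text obtains the stated form.
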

In the following, we will state the error estimate of QSM in $\mathcal{L}^2(\bbR^d)$- and $\mathcal{L}^{\infty}(\bbR^d)$-norm sense, respectively.
\begin{thm}
	\label{thm:truncation error-2}
Suppose that $f(\bx)\in \QP(\bbR^d)$ and its parent function
	$F(\by)\in H^{\alpha}(\mathbb{T}^n)$ with  $\alpha \geq 0$. Then, there exists a
	constant $C$, independent of $F$ and $N$, such that		
	\begin{align*}
		\|\calP_N f-f\|_{\mathcal{L}^2(\bbR^d)} \leq CN^{-\alpha}\vert F \vert_\alpha.
	\end{align*}		
\end{thm}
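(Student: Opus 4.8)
The plan is to reduce the $\mathcal{L}^2(\bbR^d)$ truncation error on the quasiperiodic side to the ordinary $L^2(\bbT^n)$ truncation error of the parent function, and then quote the classical spectral estimate \cref{lem:trun-periodic}. The whole argument rests on the coefficient identity $\hf_{\bk}=\hF_{\bk}$ of \cref{thm:object} together with the two Parseval identities.

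First, I would apply Parseval's identity \cref{eq:Parseval} to the residual. Since $\calP_N f$ retains exactly the modes indexed by $\bk\in K_N^n$, the splitting \cref{def:f} and orthogonality \cref{eq:orthAP} give
\begin{align*}
	\|\calP_N f-f\|^2_{\mathcal{L}^2(\bbR^d)}=\sum_{\bk\in\bbZ^n\setminus K_N^n}\vert\hf_{\bk}\vert^2.
\end{align*}
Second, I would invoke \cref{thm:object} to replace each quasiperiodic coefficient $\hf_{\bk}$ by the parent coefficient $\hF_{\bk}$. The tail sum then becomes $\sum_{\bk\in\bbZ^n\setminus K_N^n}\vert\hF_{\bk}\vert^2$, which by Parseval on the torus is precisely $\|\calP_N F-F\|^2_{L^2}$. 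Phrased via the isometry, this is the statement that $\varphi_{\bP}$ preserves the $\mathcal{L}^2$-norm and intertwines the two truncation operators, so that $\|\calP_N f-f\|_{\mathcal{L}^2(\bbR^d)}=\|\calP_N F-F\|_{L^2}$.

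Finally, since $F\in H^\alpha(\bbT^n)$, I would apply \cref{lem:trun-periodic} in the $L^2=H^0$ case (that is, with its parameter specialized to the $L^2$ setting), obtaining $\|\calP_N F-F\|_{L^2}\leq CN^{-\alpha}\vert F\vert_\alpha$ with $C$ independent of $F$ and $N$. Chaining this with the equality from the previous step yields the claimed estimate.

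The step requiring the most care is not a hard inequality but the bookkeeping of the two projection operators: $\calP_N$ acts on the quasiperiodic function $f$ while $\calP_N$ (in \cref{lem:trun-periodic}) acts on its periodic parent $F$, and these must be shown to truncate on the \emph{same} index set $K_N^n$ so that the isometry $\varphi_{\bP}$ carries one residual exactly onto the other. This is guaranteed by the definition of $\bLam_N^d$ through $K_N^n$ in \cref{eq:set-trun-Lambda} together with the coefficient equality of \cref{thm:object}; once this matching is made explicit, the rest is a direct citation of the periodic estimate.
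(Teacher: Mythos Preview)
Your proposal is correct and follows essentially the same route as the paper: reduce $\|\calP_N f-f\|_{\mathcal{L}^2(\bbR^d)}$ to $\|\calP_N F-F\|_{L^2}$ via the isometry $\varphi_{\bP}$ (underpinned by \cref{thm:object} and Parseval), then apply \cref{lem:trun-periodic} with $\mu=0$. Your extra care in spelling out why the two $\calP_N$ operators truncate on the same index set $K_N^n$ is exactly the commutation $\varphi_{\bP}\calP_N=\calP_N\varphi_{\bP}$ that the paper uses without comment, and it also matches the paper's alternative coefficient-level proof in \Cref{Appendix:proofQSM}.
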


\begin{proof}
Obviously, \cref{pro:relation-space} implies $f\in \QP_2(\bbR^d)$. Since the mapping $\varphi_{\bP}$ is isometric in the sense of $\mathcal{L}^2(\bbR^d)$, from \cref{lem:trun-periodic}, we have
	\begin{align*}
		\Vert \calP_N f-f\Vert_{\mathcal{L}^2(\bbR^d)}
		& =\Vert \calP_N \varphi_{\bP}F -\varphi_{\bP} F\Vert_{\mathcal{L}^2(\bbR^d)}
		=\Vert \varphi_{\bP}\calP_N F -\varphi_{\bP} F\Vert_{\mathcal{L}^2(\bbR^d)}
		\\
		& =\Vert \calP_N F-F \Vert_{L^2}
		\leq  C N^{-\alpha}\vert F \vert_\alpha.
	\end{align*}
	This completes the proof.
\end{proof}
An another way of proving \cref{thm:truncation error-2} is presented in \Cref{Appendix:proofQSM}.

\begin{thm}
	\label{thm:truncation error}
Suppose that $f(\bx)\in \QP_1(\bbR^d)$ and its parent function
	$F(\by)\in H^{\alpha}(\mathbb{T}^n)$ with $\alpha>q> n/2$. There exists a
	constant $C_{t}$, independent of $F$ and $N$, such that
	\begin{align*}
		\|\calP_N f-f\|_{\mathcal{L}^{\infty}(\bbR^d)} \leq C_{t} N^{q-\alpha}\vert F \vert_\alpha.
	\end{align*}
\end{thm}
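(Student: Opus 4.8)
The plan is to prove the $\mathcal{L}^{\infty}(\bbR^d)$-error bound of QSM by transferring the problem to the parent function $F$ on the torus, exactly as was done for the $\mathcal{L}^2$-estimate in \cref{thm:truncation error-2}, but now controlling the sup-norm directly through absolute summability of the tail of the Fourier series. The starting observation is that, because $f\in\QP_1(\bbR^d)$, \cref{pro:relation-space}(i) guarantees $F\in L^{\infty}(\bbT^n)$, so both $f$ and its truncation $\calP_N f$ admit absolutely convergent Fourier series, and by \cref{thm:object} the quasiperiodic coefficients $\hf_{\bk}$ coincide with the parent coefficients $\hF_{\bk}$. Using the splitting \cref{def:f}, I would write
\begin{align*}
	\|\calP_N f - f\|_{\mathcal{L}^{\infty}(\bbR^d)}
	=\Big\|\sum_{\bk\in\bbZ^n\setminus K_N^n}\hf_{\bk}e^{i\blam_{\bk}^T\bx}\Big\|_{\mathcal{L}^{\infty}(\bbR^d)}
	\leq \sum_{\bk\in\bbZ^n\setminus K_N^n}\vert\hf_{\bk}\vert
	=\sum_{\bk\in\bbZ^n\setminus K_N^n}\vert\hF_{\bk}\vert,
\end{align*}
which reduces the task to estimating the tail of the $\ell^1$-norm of the parent Fourier coefficients in terms of the $H^\alpha(\bbT^n)$ semi-norm $\vert F\vert_\alpha$.

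Next I would estimate this tail by a Cauchy--Schwarz argument, which is the standard device for converting an $H^\alpha$-bound (an $\ell^2$-weighted quantity) into an $\ell^1$-bound. Writing each summand as $\vert\hF_{\bk}\vert = \big(\|\bk\|_2^{\alpha}\vert\hF_{\bk}\vert\big)\cdot\|\bk\|_2^{-\alpha}$ (splitting off the weight that appears in $\vert F\vert_\alpha$), Cauchy--Schwarz gives
\begin{align*}
	\sum_{\bk\in\bbZ^n\setminus K_N^n}\vert\hF_{\bk}\vert
	\leq \Big(\sum_{\bk\in\bbZ^n\setminus K_N^n}\|\bk\|_2^{2\alpha}\vert\hF_{\bk}\vert^2\Big)^{1/2}
	\Big(\sum_{\bk\in\bbZ^n\setminus K_N^n}\|\bk\|_2^{-2\alpha}\Big)^{1/2}
	\leq \vert F\vert_\alpha
	\Big(\sum_{\|\bk\|_\infty\geq N}\|\bk\|_2^{-2\alpha}\Big)^{1/2}.
\end{align*}
The first factor is bounded by $\vert F\vert_\alpha$ since the tail sum is dominated by the full $H^\alpha$ semi-norm. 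It then remains to show the second factor is $O(N^{q-\alpha})$, which is where the hypotheses $\alpha>q>n/2$ enter.

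The main obstacle, and the only genuinely quantitative step, is bounding the lattice-sum tail $\sum_{\|\bk\|_\infty\geq N}\|\bk\|_2^{-2\alpha}$. The condition $\alpha>n/2$ (hence $2\alpha>n$) is exactly what makes $\sum_{\bk\in\bbZ^n}\|\bk\|_2^{-2\alpha}$ convergent, so the tail is finite; the task is to extract the decay rate in $N$. I would compare the sum with the integral $\int_{\|\by\|\geq N}\|\by\|_2^{-2\alpha}\,d\by$, which in $n$-dimensional spherical coordinates is a constant multiple of $\int_N^\infty r^{n-1-2\alpha}\,dr = \frac{N^{n-2\alpha}}{2\alpha-n}$. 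Thus the second factor is $O(N^{(n-2\alpha)/2}) = O(N^{n/2-\alpha})$. To match the stated exponent $N^{q-\alpha}$ one uses $q>n/2$, so that $N^{n/2-\alpha}\leq N^{q-\alpha}$ for $N\geq 1$; the auxiliary parameter $q$ simply absorbs the gap between the sharp rate $n/2-\alpha$ and the slightly weaker stated rate, and lets one state the bound cleanly. Collecting the two factors yields $\|\calP_N f-f\|_{\mathcal{L}^{\infty}(\bbR^d)}\leq C_t N^{q-\alpha}\vert F\vert_\alpha$ with $C_t$ depending only on $n,\alpha,q$ (through the convergent integral constant) and independent of $F$ and $N$, completing the proof.
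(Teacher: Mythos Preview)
Your proof is correct and follows the same overall strategy as the paper: bound the sup-norm by the $\ell^1$-tail of Fourier coefficients, invoke \cref{thm:object} to replace $\hf_{\bk}$ by $\hF_{\bk}$, and then apply Cauchy--Schwarz. The tactical difference lies in how the Cauchy--Schwarz product is split. The paper writes $\vert\hF_{\bk}\vert=(1+\|\bk\|_2^2)^{-q/2}\cdot(1+\|\bk\|_2^2)^{q/2}\vert\hF_{\bk}\vert$, so the first factor $\big(\sum_{\bk\notin K_N^n}(1+\|\bk\|_2^2)^{-q}\big)^{1/2}$ is dominated by a fixed convergent series (using only $q>n/2$) and the entire $N$-decay $N^{q-\alpha}$ is pulled out of the second factor via $\|\bk\|_2\geq N$ on the tail. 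You instead split $\vert\hF_{\bk}\vert=\|\bk\|_2^{-\alpha}\cdot\|\bk\|_2^{\alpha}\vert\hF_{\bk}\vert$, so the second factor is immediately $\vert F\vert_\alpha$ and the $N$-decay comes from an integral-comparison estimate of the tail weight sum $\sum_{\bk\notin K_N^n}\|\bk\|_2^{-2\alpha}$. Your route in fact yields the sharper exponent $N^{n/2-\alpha}$ before you relax it to $N^{q-\alpha}$; the paper's route avoids the integral-comparison step altogether and makes the role of the auxiliary parameter $q$ more explicit. Both arguments are valid and of comparable length.
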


\begin{proof}
	Applying \cref{thm:object} and Cauchy-Schwarz inequality, we obtain
	\begin{align*}
		& \Vert \calP_N f- f \Vert_{\mathcal{L}^{\infty}(\bbR^d)}
		=\sup_{\bx\in \bbR^n}\bigg\vert 
		\sum_{\bk\in \bbZ^n/K_N^n} \hf_{\bk}e^{i (\bP\bk)^T \bx}\bigg\vert
		\leq \sum_{\bk\in \bbZ^n/K_N^n} \vert \hf_{\bk}\vert\\
		&\leq \bigg(\sum_{\bk\in \bbZ^n/K_N^n}
		(1+\|\bk \|_2^{2})^{-q} \bigg)^{1/2}
		\bigg(\sum_{\bk\in \bbZ^n/K_N^n}
		(1+\|\bk \|_2^{2})^q |\hf_{\bk}|^2\bigg)^{1/2}\\
		&\leq CN^{q-\alpha} \bigg(\sum_{\bk\in \bbZ^n/K_N^n}
		(1+\|\bk \|_2^{2})^{-q} \bigg)^{1/2} 
		\bigg(\sum_{\bk\in \bbZ^n/K_N^n}
		\|\bk \|_2^{2\alpha-2q} \cdot (1+\|\bk \|_2^{2})^q |\hf_{\bk}|^2\bigg)^{1/2}\\
		&\leq C2^{q/2} N^{q-\alpha} \bigg(\sum_{\bk\in \bbZ^n/K_N^n}
		(1+\|\bk \|_2^{2})^{-q} \bigg)^{1/2}
		\bigg(\sum_{\bk\in \bbZ^n}
		\|\bk \|_2^{2\alpha} \cdot  |\hf_{\bk}|^2\bigg)^{1/2}\\
		&= C2^{q/2}  N^{q-\alpha} \bigg(\sum_{\bk\in \bbZ^n/K_N^n}
		(1+\|\bk \|_2^{2})^{-q} \bigg)^{1/2}
		\bigg(\sum_{\bk\in \bbZ^n}
		\|\bk \|_2^{2\alpha} \cdot  |\hF_{\bk}|^2\bigg)^{1/2}\\
		&\leq C_{t} N^{q-\alpha}\vert F \vert_\alpha.
	\end{align*}
	The last inequality holds due to $\sum_{\bk\in \bbZ^n/K_N^n}
	(1+\|\bk \|_2^{2})^{-q}<\infty $ when $q>n/2$.
\end{proof}

Besides, we can also directly give the error analysis of QSM without using the parent function, see \Cref{Appendix:proofQSM2} for details. These results also show that the QSM has an exponential convergence rate.

\subsection{Error analysis of PM}
\label{sec:error-PM}

The PM grasps the essence that the quasiperiodic function can be embedded into its parent function. Assume that the Fourier series in \cref{eq:perFourierSeries} converges to $F(\by)$ at every grid point of $\bbT^n_N$. Applying the discrete orthogonality \cref{eq:disOrth}, 
the discrete parent Fourier coefficient of \cref{eq:parentdisFourierCoeff} becomes
\begin{align}
\tF_{\bk} =\langle F, e^{i \bk^T \by_{\bj}} \rangle_N
= \Big\langle \sum_{\bell\in\bbZ^n}\hF_{\bell}e^{i\bell^T \by_{\bj}} , e^{i \bk^T \by_{\bj}} \Big\rangle_N
= \hF_{\bk}+\sum_{\bmm\in\bbZ^n_*} \hF_{\bk+2N\bmm},~\bk\in K_N^n,
\label{eq:relation-DFT-FT}
\end{align}
where $\bbZ^n_*=\bbZ^n/ \{\bf{0}\}$.
Recall that $\blam_{\bk}=\bP\bk$ and from \cref{eq:relation-DFT-FT}, we have
\begin{align*}
	\sum_{\bk\in K_N^n} \tF_{\bk}e^{i\blam_{\bk}^T\bx}=\sum_{\bk\in K_N^n} \hF_{\bk} e^{i\blam_{\bk}^T\bx}+\sum_{\bk\in K_N^n} \Big(\sum_{\bmm\in\bbZ^n_*} \hF_{\bk+2N\bmm}\Big) e^{i\blam_{\bk}^T\bx}.
\end{align*}
From \cref{thm:object} and \Cref{eq:tri-quasi}, we obtain
\begin{align*}
	\sum_{\blam_{\bk} \in \bLam_N^d} \tf_{\bk}e^{i\blam_{\bk}^T\bx}
	=\sum_{\blam_{\bk}\in \bLam_N^d} \hf_{\bk} e^{i\blam_{\bk}^T\bx}
	+\sum_{\blam_{\bk}\in \bLam_N^d} \Big(\sum_{\bmm\in\bbZ^n_*} \hf_{\bk+2N\bmm}\Big) e^{i\blam_{\bk}^T \bx}.
\end{align*}
It follows that
\begin{align*}
	I_Nf=\calP_N f+R_Nf,
\end{align*}
where 
\begin{align*}
	\calP_N f=\sum_{\blam_{\bk}\in \bLam_N^d} \hf_{\bk} e^{i\blam_{\bk}^T \bx},~~
	R_Nf=\sum_{\blam_{\bk}\in \bLam_N^d} \Big(\sum_{\bmm\in\bbZ^n_*} \hf_{\bk+2N\bmm}\Big) e^{i\blam_{\bk}^T \bx}.
\end{align*}
Similar to the periodic Fourier pseudo-spectral method, $I_N f$ and $R_Nf$ represent the interpolation and the aliasing part, respectively. As a consequence, we have
\begin{align*}
	f-I_Nf=(f-\calP_N f)-R_Nf.
\end{align*}
Thus, the approximation error of PM consists of two parts, the truncation error $f-\calP_N f$ as QSM has, and the aliasing error $R_Nf$. 
The truncation error estimates of \cref{thm:truncation error-2} and
\cref{thm:truncation error} for QSM are also valid for PM. 
The aliasing error will be analyzed in the following content.
The $\mathcal{L}^2(\bbR^d)$- and $\mathcal{L}^{\infty}(\bbR^d)$-estimates of interpolation error $f-I_Nf$ are stated as follows, respectively.
\begin{thm}
	\label{thm:pm error-2}
Suppose that $f(\bx)\in\QP(\bbR^d)$ and its parent function $F(\by)\in H^{\alpha}(\mathbb{T}^n)$ with $\alpha \geq 0$.
	There exists a constant $C$, independent of $F$ and $N$, such that
	\begin{align*}
		\|I_Nf-f\|_{\mathcal{L}^2(\bbR^d)} \leq CN^{-\alpha}\vert F \vert_\alpha.
	\end{align*}
\end{thm}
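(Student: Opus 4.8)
The plan is to build on the splitting $f-I_Nf=(f-\calP_N f)-R_Nf$ derived just above the statement. The observation I would exploit is that the two pieces have Fourier exponents supported on disjoint index sets: $f-\calP_N f$ involves only $\bk\in\bbZ^n\setminus K_N^n$, while $R_Nf$ involves only $\bk\in K_N^n$, and these translate to disjoint sets of exponents $\blam_{\bk}=\bP\bk$ because $\bk\mapsto\bP\bk$ is injective (the columns of $\bP$ being $\bbQ$-linearly independent). Hence Parseval's identity \cref{eq:Parseval} splits the norm orthogonally,
\[
\|f-I_Nf\|_{\mathcal{L}^2(\bbR^d)}^2=\|f-\calP_N f\|_{\mathcal{L}^2(\bbR^d)}^2+\|R_Nf\|_{\mathcal{L}^2(\bbR^d)}^2,
\]
so it suffices to bound each summand by $CN^{-\alpha}\vert F\vert_\alpha$. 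The truncation term is already handled by \cref{thm:truncation error-2}, leaving the aliasing term $R_Nf$ as the only genuinely new quantity. Equivalently, via the isometry $\varphi_{\bP}$ this interpolation error coincides with the classical Fourier pseudo-spectral interpolation error of $F$ on $\bbT^n$, so morally the whole estimate reduces to the standard periodic one.

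For the aliasing term I would pass to the torus. Applying Parseval \cref{eq:Parseval} to the finite trigonometric sum $R_Nf$ and replacing each quasiperiodic coefficient by its parent coefficient through \cref{thm:object} (so that $\hf_{\bk+2N\bmm}=\hF_{\bk+2N\bmm}$) gives
\[
\|R_Nf\|_{\mathcal{L}^2(\bbR^d)}^2=\sum_{\bk\in K_N^n}\Bigl\vert\sum_{\bmm\in\bbZ^n_*}\hF_{\bk+2N\bmm}\Bigr\vert^2.
\]
I would then estimate the inner sum by Cauchy--Schwarz with the weights $\|\bk+2N\bmm\|_2^{\pm\alpha}$,
\[
\Bigl\vert\sum_{\bmm\in\bbZ^n_*}\hF_{\bk+2N\bmm}\Bigr\vert^2\le\Bigl(\sum_{\bmm\in\bbZ^n_*}\|\bk+2N\bmm\|_2^{-2\alpha}\Bigr)\Bigl(\sum_{\bmm\in\bbZ^n_*}\|\bk+2N\bmm\|_2^{2\alpha}\vert\hF_{\bk+2N\bmm}\vert^2\Bigr).
\]

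The decisive and most delicate step is the elementary geometric lower bound $\|\bk+2N\bmm\|_2\ge N\|\bmm\|_2$ valid for every $\bk\in K_N^n$ and $\bmm\neq\bm{0}$, which follows coordinatewise from $\vert k_j+2Nm_j\vert\ge N\vert m_j\vert$ (the coordinates with $m_j=0$ contributing nothing to $\|\bmm\|_2$). This yields $\sum_{\bmm\in\bbZ^n_*}\|\bk+2N\bmm\|_2^{-2\alpha}\le N^{-2\alpha}\sum_{\bmm\in\bbZ^n_*}\|\bmm\|_2^{-2\alpha}$, and the remaining constant is finite exactly when $2\alpha>n$. This convergence requirement is the main obstacle: the argument genuinely needs $\alpha>n/2$, which is also the Sobolev threshold making the interpolant $I_Nf$ well defined, so I would carry the proof under that hypothesis. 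Finally I would reindex the double sum: since $K_N^n$ is a complete set of residues modulo $2N$, as $\bk$ ranges over $K_N^n$ and $\bmm$ over $\bbZ^n_*$ the indices $\bell=\bk+2N\bmm$ enumerate $\bbZ^n\setminus K_N^n$ exactly once, whence
\[
\sum_{\bk\in K_N^n}\sum_{\bmm\in\bbZ^n_*}\|\bk+2N\bmm\|_2^{2\alpha}\vert\hF_{\bk+2N\bmm}\vert^2=\sum_{\bell\in\bbZ^n\setminus K_N^n}\|\bell\|_2^{2\alpha}\vert\hF_{\bell}\vert^2\le\vert F\vert_\alpha^2.
\]
Combining these bounds gives $\|R_Nf\|_{\mathcal{L}^2(\bbR^d)}\le CN^{-\alpha}\vert F\vert_\alpha$, and together with the truncation estimate from \cref{thm:truncation error-2} the orthogonal splitting of the first paragraph closes the argument.
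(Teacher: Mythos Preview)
Your argument is correct and essentially coincides with the paper's own reasoning. The paper in fact gives two proofs: the short one in the body reduces everything to the periodic case via the isometry $\varphi_{\bP}$ (your ``morally'' remark) and then quotes \cite{C2006Spectral} for $\|R_N F\|_{L^2}\le C N^{-\alpha}|F|_\alpha$; the detailed one in \Cref{Appendix:proofPM} does exactly what you do---orthogonal splitting, Parseval combined with \cref{thm:object}, Cauchy--Schwarz on the aliasing sum with weights comparable to $\|\bk+2N\bmm\|_2^{\pm\alpha}$, a coordinatewise lower bound $|k_j+2Nm_j|\ge(2|m_j|-1)N$, and the reindexing $\bell=\bk+2N\bmm$ over $\bbZ^n\setminus K_N^n$. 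Your bound $\|\bk+2N\bmm\|_2\ge N\|\bmm\|_2$ is a slightly cleaner variant of theirs and leads to the same constant $\sum_{\bmm\ne 0}\|\bmm\|_2^{-2\alpha}$.

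You are also right to flag the regularity threshold: both your argument and the paper's appendix proof genuinely need $\alpha>n/2$ for the aliasing series to converge (and for $I_N f$ to be defined via pointwise evaluation), even though the theorem is stated for $\alpha\ge 0$. The paper does not comment on this discrepancy; your observation is a valid caveat rather than a gap in your proof.
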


\begin{proof}
	\cref{pro:relation-space} tells us that $f\in\QP_2(\bbR^d)$.
	Since $\varphi_{\bP}(\calP_N F)=\calP_N (\varphi_{\bP} F)$ and $\varphi_{\bP}(R_NF)=R_N(\varphi_{\bP} F)$, we obtain
	\begin{align*}
		\Vert I_Nf-f\Vert_{\mathcal{L}^2(\bbR^d)}
		&\leq \Vert f-\calP_N f\Vert_{\mathcal{L}^2(\bbR^d)} +\Vert R_Nf\Vert_{\mathcal{L}^2(\bbR^d)}\\
		&=\Vert \varphi_{\bP} F-\calP_N ( \varphi_{\bP} F)\Vert_{\mathcal{L}^2(\bbR^d)} +\Vert R_N(\varphi_{\bP} F)\Vert_{\mathcal{L}^2(\bbR^d)}\\
		&=\Vert \varphi_{\bP} F- \varphi_{\bP} (\calP_N  F)\Vert_{\mathcal{L}^2(\bbR^d)} +\Vert \varphi_{\bP} (R_N F)\Vert_{\mathcal{L}^2(\bbR^d)}\\
		&=\Vert F-\calP_N F\Vert_{L^2} +\Vert R_N F\Vert_{L^2}.
	\end{align*}
	Ref.\,\cite{C2006Spectral} (see its Section 5.1.3) shows that
	\begin{align*}
		\Vert R_N F\Vert_{L^2}\leq C_1 N^{-\alpha}\vert F \vert_\alpha,
	\end{align*}
	where $C_1$ is independent of $F$ and $N$.
	Then, the proof is completed by combining \cref{lem:trun-periodic}.
\end{proof}
An another way of proving \cref{thm:pm error-2} is provided in \Cref{Appendix:proofPM}.

\begin{thm}
	\label{thm:pm error}
Suppose that $f(\bx)\in \QP_1(\bbR^d)$ and its parent function
	$F(\by)\in H^{\alpha}(\mathbb{T}^n)$ with  $\alpha>q>n/2$.
	There exists a constant $C_p$, independent of $F$ and $N$, such that 
	\begin{align*}
		\|I_Nf-f\|_{\mathcal{L}^{\infty}(\bbR^d)} \leq C_p N^{q-\alpha}\vert F \vert_\alpha.
	\end{align*}	
\end{thm}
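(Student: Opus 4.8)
The plan is to reuse the decomposition already derived in \cref{sec:error-PM}, namely $f-I_Nf=(f-\calP_N f)-R_Nf$, and to estimate each piece separately in $\mathcal{L}^{\infty}(\bbR^d)$ via the triangle inequality. The truncation part $f-\calP_N f$ is exactly the quantity controlled in \cref{thm:truncation error}: under the present hypotheses $\alpha>q>n/2$ it already gives $\|f-\calP_N f\|_{\mathcal{L}^{\infty}(\bbR^d)}\le C_t N^{q-\alpha}\vert F\vert_\alpha$. Hence the entire task reduces to bounding the aliasing term $R_Nf$ by the same order $N^{q-\alpha}\vert F\vert_\alpha$.

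For the aliasing part I would first pass the supremum through the absolute value and bound it by the sum of the moduli of the coefficients,
\[
	\|R_Nf\|_{\mathcal{L}^{\infty}(\bbR^d)}
	\le \sum_{\bk\in K_N^n}\Big|\sum_{\bmm\in\bbZ^n_*}\hf_{\bk+2N\bmm}\Big|
	\le \sum_{\bk\in K_N^n}\sum_{\bmm\in\bbZ^n_*}|\hf_{\bk+2N\bmm}|,
\]
and then invoke \cref{thm:object} to replace each $\hf_{\bk+2N\bmm}$ by the parent coefficient $\hF_{\bk+2N\bmm}$. The crucial observation, which I regard as the heart of the argument, is that the index map $(\bk,\bmm)\mapsto \bk+2N\bmm$ is a bijection from $K_N^n\times\bbZ^n$ onto $\bbZ^n$: since each coordinate of $\bk$ ranges over the $2N$ consecutive integers $-N,\dots,N-1$, the set $K_N^n$ is a complete residue system modulo $2N$ in every component. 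Restricting to $\bmm\neq\bm 0$ turns this into a bijection onto $\bbZ^n/K_N^n$, so the double sum collapses to a single tail sum,
\[
	\sum_{\bk\in K_N^n}\sum_{\bmm\in\bbZ^n_*}|\hF_{\bk+2N\bmm}|
	=\sum_{\bk\in\bbZ^n/K_N^n}|\hF_{\bk}|,
\]
which is precisely the tail quantity already appearing in the chain of inequalities proving \cref{thm:truncation error}.

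Finally I would repeat that estimate verbatim: apply Cauchy--Schwarz, use that every $\bk\notin K_N^n$ has $\|\bk\|_2\ge N$ together with $\alpha>q$ to extract the factor $N^{q-\alpha}$, and use the summability $\sum_{\bk\in\bbZ^n/K_N^n}(1+\|\bk\|_2^2)^{-q}<\infty$ guaranteed by $q>n/2$, together with $\sum_{\bk\in\bbZ^n}\|\bk\|_2^{2\alpha}|\hF_{\bk}|^2=\vert F\vert_\alpha^2$, to conclude $\sum_{\bk\in\bbZ^n/K_N^n}|\hF_{\bk}|\le CN^{q-\alpha}\vert F\vert_\alpha$. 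Adding the two contributions yields $\|f-I_Nf\|_{\mathcal{L}^{\infty}(\bbR^d)}\le C_p N^{q-\alpha}\vert F\vert_\alpha$ with $C_p$ depending only on the constants from the two bounds.

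The only point demanding genuine care is the re-indexing of the aliasing frequencies onto the complement $\bbZ^n/K_N^n$; once that bijection is in hand, no analysis beyond \cref{thm:object} and the Cauchy--Schwarz argument of \cref{thm:truncation error} is required, so I do not anticipate a substantive obstacle.
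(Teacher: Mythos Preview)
Your proposal is correct and follows the same overall strategy as the paper: split $f-I_Nf$ into truncation and aliasing, invoke \cref{thm:truncation error} for the first, and control the aliasing sum via Cauchy--Schwarz with the weight $(1+\|\cdot\|_2^2)^{-q}$ and the regularity of $F$.

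The one noteworthy difference is your re-indexing step. The paper keeps the aliasing term as the double sum $\sum_{\bk\in K_N^n}\sum_{\bmm\in\bbZ^n_*}|\hF_{\bk+2N\bmm}|$ and applies Cauchy--Schwarz directly to it, using $\|\bk+2N\bmm\|_2\ge N$ to pull out $N^{q-\alpha}$ and then checking separately that $S=\sum_{\bk\in K_N^n}\sum_{\bmm\in\bbZ^n_*}(1+\|\bk+2N\bmm\|_2^2)^{-q}$ converges. Your bijection $(\bk,\bmm)\mapsto\bk+2N\bmm$ onto $\bbZ^n\setminus K_N^n$ collapses this to the single tail sum $\sum_{\bk\notin K_N^n}|\hF_{\bk}|$, which is literally the quantity already estimated in the proof of \cref{thm:truncation error}; the paper's series $S$ is nothing other than $\sum_{\bk\notin K_N^n}(1+\|\bk\|_2^2)^{-q}$ in disguise. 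So your route is a mild streamlining---it recycles the truncation estimate verbatim rather than repeating the Cauchy--Schwarz computation---but the analytic content is identical.
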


\begin{proof}
	According to the definition of $\Vert\cdot \Vert_{\mathcal{L}^{\infty}(\bbR^d)}$, we have
	\begin{align}
		\Vert I_Nf-f\Vert_{\mathcal{L}^{\infty}(\bbR^d)}
		\leq \sum_{\blam_{\bk}\in \bLam/\bLam_N^d} \vert \hf_{\bk} \vert
		+\sum_{\blam_{\bk}\in \bLam_N^d} \Big\vert \sum_{\bmm\in\bbZ^n_*} \hf_{\bk+2N\bmm}\Big\vert.
		\label{eq:PM-thm-infty-proof}
	\end{align}
	From \cref{thm:truncation error}, it follows that
	\begin{align*}
		\sum_{\blam_{\bk}\in \bLam/\bLam_N^d} \vert \hf_{\bk} \vert
		\leq C_{t}N^{q-\alpha}\vert f\vert_\alpha,
	\end{align*}
	where $\alpha>q>n/2$.
For the second term on the right side of inequality \cref{eq:PM-thm-infty-proof}, combining with Cauchy-Schwarz inequality, we can obtain
	\begin{align*}
		&\sum_{\blam_{\bk}\in \bLam_N^d} \Big\vert \sum_{\bmm\in\bbZ^n_*} \hf_{\bk+2N\bmm}\Big\vert
		=\sum_{\bk\in K_N^n} \Big\vert \sum_{\bmm\in\bbZ^n_*} \hF_{\bk+2N\bmm}\Big\vert
		\leq \sum_{\bk\in K_N^n} \sum_{\bmm\in\bbZ^n_*} \Big\vert \hF_{\bk+2N\bmm}\Big\vert\\
		& \leq \Big[\sum_{\bk\in K_N^n}
		\sum_{\bmm\in\bbZ^n_*} (1+\Vert\bk+2N\bmm \Vert_2^2)^{-\alpha}\Big]^{\frac{1}{2}}
		\cdot \Big [ \sum_{\bk\in K_N^n} \sum_{\bmm\in\bbZ^n_*}
		(1+\Vert\bk+2N\bmm \Vert_2^2)^{\alpha} \,\vert\hF_{\bk+2N\bmm}\vert^2\Big]^{\frac{1}{2}}.
	\end{align*}
For $\bk\in K_N^n$ and $\bmm\in\bbZ^n_*$, we have $\Vert\bk+2N\bmm \Vert_2^2\geq N^2$.
Furthermore, for $\alpha>q$, it follows that
\begin{align*}
&\Big[\sum_{\bk\in K_N^n}
\sum_{\bmm\in\bbZ^n_*} (1+\Vert\bk+2N\bmm \Vert_2^2)^{-\alpha}\Big]^{\frac{1}{2}}\\
&=\Big[\sum_{\bk\in K_N^n}
\sum_{\bmm\in\bbZ^n_*} (1+\Vert\bk+2N\bmm \Vert_2^2)^{q-\alpha}\cdot (1+\Vert\bk+2N\bmm \Vert_2^2)^{-q}\Big]^{\frac{1}{2}}\\
&=(1+N^2)^{\frac{q-\alpha}{2}}\cdot \Big[\sum_{\bk\in K_N^n}
\sum_{\bmm\in\bbZ^n_*} (1+\Vert\bk+2N\bmm \Vert_2^2)^{-q}\Big]^{\frac{1}{2}}\\
&\leq 2^{\frac{q-\alpha}{2}}N^{q-\alpha}\cdot \Big[\sum_{\bk\in K_N^n}
\sum_{\bmm\in\bbZ^n_*} (1+\Vert\bk+2N\bmm \Vert_2^2)^{-q}\Big]^{\frac{1}{2}}.
\end{align*}
When $q>n/2$, the series $S:=\sum_{\bk\in K_N^n}\sum_{\bmm\in\bbZ^n_*} (1+\Vert\bk+2N\bmm \Vert_2^2)^{-q}$ converges. Therefore, 
	\begin{align*}
	&\sum_{\blam_{\bk}\in \bLam_N^d} \Big\vert \sum_{\bmm\in\bbZ^n_*} \hf_{\bk+2N\bmm}\Big\vert\\
	&\leq  2^{\frac{q-\alpha}{2}}N^{q-\alpha} S^{1/2} \cdot
	\Big [\sum_{\bk\in K_N^n} \sum_{\bmm\in\bbZ^n_*}
	(1+\Vert\bk+2N\bmm \Vert_2^2)^{\alpha} \,\vert\hF_{\bk+2N\bmm}\vert^2\Big]^{\frac{1}{2}}\\
	&\leq  2^{q/2} N^{q-\alpha} S^{1/2} \vert F \vert_\alpha
	=C_{a} N^{q-\alpha} \vert F \vert_\alpha.
\end{align*}
	Then, 
\begin{align*}
	\Vert I_Nf-f\Vert_{\mathcal{L}^{\infty}(\bbR^d)}
	\leq C_{t}N^{q -\alpha} \vert F \vert_\alpha+C_{a} N^{q-\alpha} \vert F \vert_\alpha
	=C_pN^{q-\alpha} \vert F \vert_\alpha.
\end{align*}

\end{proof}




\section{Application}
\label{sec:schrodinger}



In \Cref{sec:error}, we have provided prior estimates of PM and QSM. In this section, we further investigate the accuracy and efficiency of numerical methods for solving the quasiperiodic system. The TQSE with a spatially quasiperiodic solution is an important quasiperiodic system\,\cite{li2021numerical,Wang2020Localization,bourgain2004anderson,Wang2021Layer-Splitting}. Concretely, consider
\begin{align}
i\psi_t(x,t)=-\frac{1}{2}\psi_{xx}(x,t)+v(x)\psi(x,t),~~(x,t)\in \bbR\times [0,T],
\label{eq:SE}
\end{align}
with incommensurate potential $v(x)=\sum_{\lambda\in \Lambda_1}\hat{v}_{\lambda}e^{i\lambda x}$, where $\Lambda_1=\{1,-1,\sqrt{5},-\sqrt{5}\}$ and $\hat{v}_{\lambda}=1$.
Let the initial value
$
\psi_0(x)=\sum_{\lambda\in \Lambda_2} \hat{c}_{\lambda}e^{i\lambda x},~x\in \bbR,
$ with $\Lambda_2=\{\lambda = m+n\sqrt{5}: m,n\in \bbZ, -32\leq m,n\leq 31\}$ and $\hat{c}_{\lambda}=e^{-(\vert n\vert +\vert m\vert)}$. Therefore, the projection matrix is $\bP=(1,\sqrt{5})$.
The product term of wave function $\psi(x,t)$ and potential function $v(x)$, a convolution in the reciprocal space, allows us to examine the performance of different methods.

In the following, we empoly QSM, PM and PAM to discretize \cref{eq:SE} in space direction, and the second-order operator splitting (OS2) method in time direction.
In each interval $[0,2\pi)$, we use $2N$ discrete points, corresponding to the number of basis functions of QSM. 
Here we are concerned with the accuracy of spatial quasiperiodic solution, therefore, the final time $T$ can be arbitrary. For simplicity, we choose $T=0.001$. The time step size $\tau=1\times 10^{-7}$ ensures that the time truncation error does not affect the spatial approximation error.

\subsection{Numerical implementation}
\label{subsec:Methodimplement}

\subsubsection{QSM discretization}
\label{subsubsec:QSMimplement}

As \Cref{subsec:QSM} states, the QSM approximates the wave function $\psi(x,t)$ in a finite dimensional space
\begin{align*}
	\psi(x,t)\approx \calP_N \psi(x,t)
	= \sum_{\lambda\in \bLam_N}\hat{\psi}_{\lambda}(t)e^{i\lambda x}.
\end{align*}
The quasiperiodic Fourier coefficient $\hat{\psi}_\lambda$ is obtained by the continuous Fourier-Bohr transform \cref{eq:transform-FC}. 
$\bLam_N$ is defined by \cref{eq:set-trun-Lambda} with $d=1$ and $n=2$. 
$\#(\bLam_N)=(2N)^2:=D$.
Then the TQSE \cref{eq:SE} is discretized as
\begin{equation}
\begin{aligned}
i\sum_{\lambda\in \bLam_N}
\frac{d \hat{\psi}_{\lambda}(t)}{dt}e^{i\lambda x}
=\frac{1}{2}\sum_{\lambda\in \bLam_N}
\vert\lambda \vert^2\hat{\psi}_{\lambda}(t)e^{i\lambda x}
+\Big (\sum_{\lambda\in \Lambda_1}\hat{v}_{\lambda}e^{i\lambda x} \Big)
\Big(\sum_{\lambda\in \bLam_N}\hat{\psi}_{\lambda}(t)e^{i\lambda x} \Big).
\end{aligned}
	\label{eq:SE-fourier}
\end{equation}
Making the inner product of \cref{eq:SE-fourier} by $e^{i\beta x}$ and applying the
orthogonality \cref{eq:orthAP}, we obtain
\begin{align}
	i\frac{d \hat{\psi}_{\beta}(t)}{dt}
	=\frac{1}{2}\vert\beta \vert^2\hat{\psi}_{\beta}(t)
	+\sum_{\lambda\in \bLam_N}\hat{v}_{\beta-\lambda} \hat{\psi}_{\beta}(t),~~\beta\in \bLam_N.
	\label{eq:SE:QSM}
\end{align}


By applying OS2 method to semi-discrete equation \cref{eq:SE:QSM}, we can obtain the fully discrete scheme as given in \Cref{subappendix:QSM}.
Since the QSM cannot use FFT, the computational cost of solving \cref{eq:SE:QSM} in each time step is dominated by the convolution calculation with computational complexity of $O(D^2)$.



\subsubsection{PM discretization}
\label{subsubsec:PMimplement}

The PM is a generalized Fourier pseudo-spectral method. As a sequence, the PM can further discretize $x$ variable through the collocation points $x_j=\bm{P}\bm{y}_j$ with 
$\by_{\bj}=( j_1 \pi/N, j_2 \pi /N)\in \bbT_N^2,~0\leq j_1,j_2 < 2N$. 
We can expand the spatial function by discrete Fourier-Bohr expansion
\begin{align*}
	\psi(x_j,t) \approx
	I_N \psi(x_j,t)=\sum_{\blam\in \bLam_N} \tilde{\psi}_{\lambda}(t)e^{i\lambda x_j}
	~~j=0,1,\cdots,D-1,
\end{align*}
where $\tpsi_{\lambda}(t)= \tPsi_{\bk}(t)=\langle \Psi, e^{i \bk^T \by_{\bj}} \rangle_N$,
$\lambda=\bP \bk$, and $D=(2N)^2$ is the number of spatial nodes.

Denote that $V(\by)$ is the parent function of $v(x)$. Similarly, we can expand $v(x)$ using the discrete Fourier-Bohr transform. The TQSE \cref{eq:SE} is discretized as
\begin{equation}
	\begin{aligned}
		i\sum_{\lambda\in \bLam_N}
		\frac{d \tpsi_{\lambda}(t)}{dt}e^{i\lambda x_j}
		=\frac{1}{2}\sum_{\lambda\in \bLam_N}
		\vert\lambda \vert^2\tpsi_{\lambda}(t)e^{i\lambda x_j}
		+\Big (\sum_{\lambda\in \Lambda_1}\tilde{v}_{\lambda}e^{i\lambda x_j} \Big)
		\Big(\sum_{\lambda\in \bLam_N}\tpsi_{\lambda}(t)e^{i\lambda x_j} \Big),
		\label{eq:SE-fourier-discrete}
	\end{aligned}
\end{equation}
where $\tilde{v}_{\lambda}=\langle V, e^{i \bk^T \by_{\bj}} \rangle_N$.
Taking the discrete inner product of \cref{eq:SE-fourier-discrete} by $e^{i\beta
	x_j}$ and applying the discrete orthogonality \cref{eq:disOrth} yield
\begin{align}
	i\frac{d \tpsi_{\beta}(t)}{dt}
	=\frac{1}{2}\vert\beta \vert^2\tpsi_{\beta}(t)
	+\sum_{\lambda\in \bLam_N}\tilde{v}_{\beta-\lambda} \tpsi_{\lambda}(t),~\beta\in \bLam_N.
	\label{eq:PMSE}
\end{align}

Similarly, the OS2 method can be applied to discretize the semi-discrete equation \cref{eq:PMSE}. The corresponding fully discrete scheme can be found in \cref{subappendix:PM}. Meanwhile, we can use FFT to efficiently compute the convolution terms in \cref{eq:PMSE} based on the discrete Fourier-Bohr transform. Therefore, the computational complexity of PM in each time step is the level of $O(D\log D)$.



\subsubsection{PAM discretization}
\label{subsubsec:PAMimplement}

The PAM, using a periodic system to approximate the quasiperiodic systems, is a widely used approach to addressing quasiperiodic systems\,\cite{jiang2018numerical}. 
Here, we use a periodic Schr\"{o}dinger equation over a finite fundamental region $[0,2\pi L)$, $L\in\mathbb{N}_0$ to approximate TQSE. Then we can use the periodic Fourier pseudo-spectral method to solve the approximated periodic Schr\"{o}dinger equation. We use $D=2ML$ discrete points to discretize the one-dimensional periodic system. The computational complexity in each time step is at the level of $O((2ML)\log (2ML))$. 
\cref{subappendix:PAM} provides the implementation of the PAM of solving TQSE.


\subsection{Numerical results}

In this subsection, we present numerical results of solving TQSE \cref{eq:SE} by using PM, QSM and PAM. All algorithms are coded by MSVC++ 14.29 on Visual Studio Community 2019. The used FFT in PM and PAM is based on the software FFTW 3.3.5\,\cite{frigo2005design}. All computations are carried out on a workstation with an Intel Core 2.30GHz CPU, 16GB RAM.
The reference solution $\psi^*(x,T)$ is obtained by using PM with the time step size $\tau = 1 \times 10^{-7}$, the fine mesh size $h=\pi/128$ and the final time $T=0.001$.
In our numerical results, we mainly show the numerical error $e_N$ and CPU time of three algorithms. Firstly, we give the calculation formula of $e_N$ of QSM, PM and PAM. Denote the exact solution of TQSE
\begin{align*}
\psi^*(x,T)=\sum_{\lambda\in \Lambda} \tilde{\psi}_{\lambda}^*(T)e^{i\lambda x}.
\end{align*}	
In the QSM, from the Parseval's equality, the numerical error is 
\begin{align*}
e_N^2&=\Vert\psi^*(x,T)-\calP_N \psi(x,T)\Vert^2_{\mathcal{L}^2(\bbR)}\\
	&=\lim_{K\rightarrow +\infty}
	\frac{1}{2K}\int^K_{-K} \vert\psi^*(x,T)-\calP_N \psi(x,T) \vert^2\,dx\\
	&=\sum_{\lambda\in \Lambda_N}\vert \tilde{\psi}_{\lambda}^*(T)-\hat{\psi}_{\lambda}(T)\vert^2.
\end{align*}
In the PM, we can obtain
\begin{align*}
e_N^2=\Vert\psi^*(x,T)-I_N \psi(x,T)\Vert^2_{\mathcal{L}^2(\bbR)}
=\sum_{\lambda\in \Lambda_N}\vert \tilde{\psi}_{\lambda}^*(T)-\tilde{\psi}_{\lambda}(T)\vert^2.
\end{align*}
In the PAM, assume that the exact solution of the periodic Schr\"{o}dinger system \eqref{eq:approxSE} is
\begin{align*}
\varphi^*(x, T)=\sum_{k\in \bbZ}\tilde{\psi}^*_k(T)e^{ik x},~~x\in [0,2\pi L).
\end{align*}
The numerical solution obtained by PAM is
\begin{align*}
\varphi_M(x,T)=\sum_{k\in \Lambda_M^{PAM}} \tilde{\varphi}_k(T)e^{ik x},~~x\in [0,2\pi L),
\end{align*}
where $\Lambda^{PAM}_M=\{k\in \bbZ: -LM\leq k <LM \}$ is a finite subset of $\bbZ$ containing a subset of $\{k\in\mathbb{Z}: k=[L\lambda ],~\lambda\in \Lambda_N\}$.
Then we can compute the numerical error
\begin{align*}
e_M^2=\Vert \varphi^*(x, T)-\varphi_M(x,T) \Vert_{L^2([0,2\pi L))}
=\sum_{k\in \Lambda_M^{PAM}}\vert  \tilde{\psi}_{k}^*(T) -\tilde{\varphi}_k(T)\vert^2.
\end{align*}
Therefore, the errors of three methods are all measured by the convergence of corresponding Fourier coefficients. Note that both QSM and PM calculate the global quasiperiodic system over $\mathbb{R}$, while the PAM only computes a periodic approximation system on a fundamental period $[0,2\pi L)$.



We present the numerical results of PAM with $M=4N$. For convenience, we use $e_N$ to replace $e_M$. Through extensive experiments, we adopt $N=8$ (also see \Cref{tab:error}) in PAM to ensure enough numerical accuracy of discretizing TQSE.
\begin{figure}[!htbp]
	\subfigure[In the PAM, the relationship between the numerical error $e_N$ and $L$ with $N=8$.]{
		\label{fig:PAMerrorN32v2}
		\includegraphics[width=4.5in]{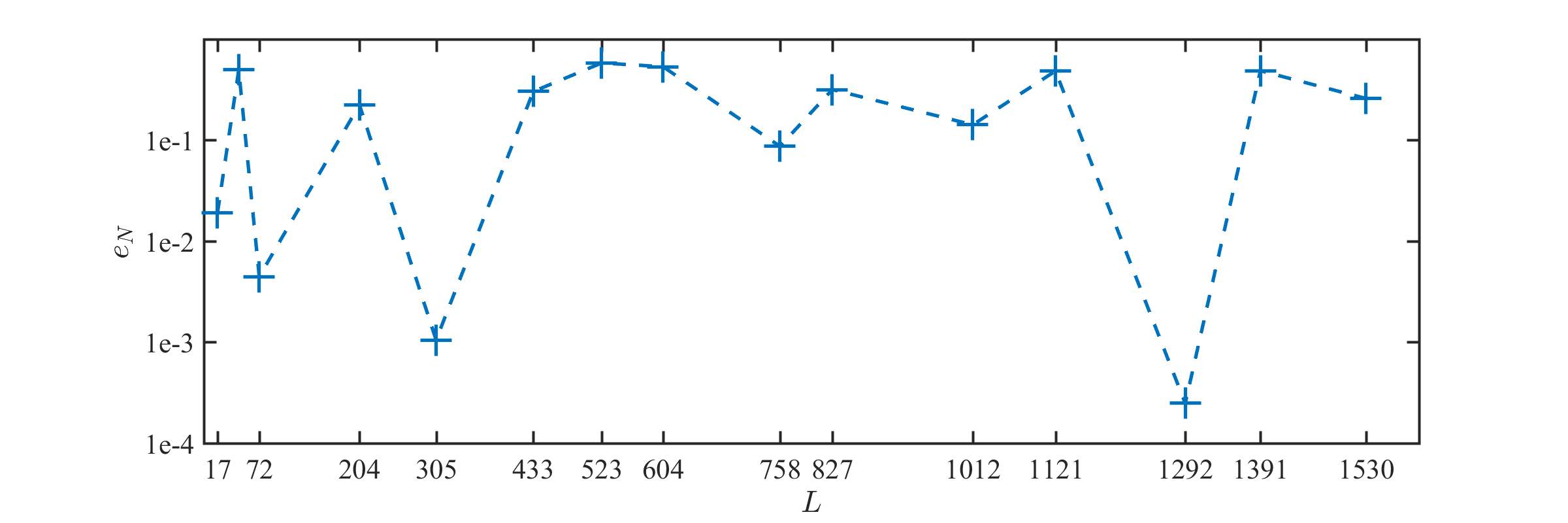}} 
	\subfigure[Diophantine approximation error]{
		\label{fig:Diophantineerror}
		\includegraphics[width=4.5in]{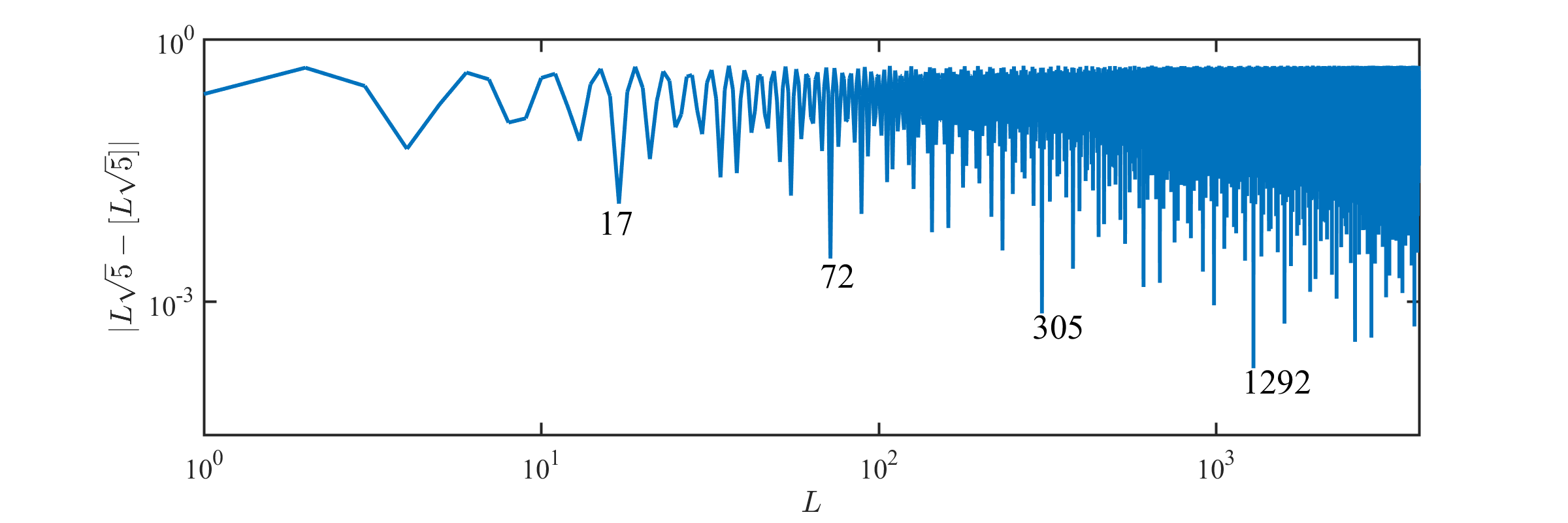}}
	\caption{Approximation error of PAM as the domain size $L$ increases.}
	\label{fig:PAM}
\end{figure}
\Cref{fig:PAMerrorN32v2} shows the approximation error obtained by PAM with $N=8$. The approximation error $e_N$ of PAM exhibits an oscillation phenomenon as domain size $L$ increases. This behavior can be attributed to the Diophantine approximation error, \textit{i.e.}, using rational numbers to approximate the irrational number. As depicted in \Cref{fig:Diophantineerror}, the Diophantine approximation error $\{L\sqrt{5}\}:=\vert L\sqrt{5}-[L\sqrt{5}] \vert $, where $[\alpha]$ denotes the nearest integer to $\alpha$, does not uniformly decrease with an increase of $L$ due to the arithmetic property of irrational number $\sqrt{5}$. Relevant function approximation theory on the PAM can refer to \cite{jiang2022pam}. For specific values of $L$, such as 17, 72, 305 and 1292, the Diophantine approximation error as well as the approximation error $e_N$ can gradually decrease. 

Then, we compare the approximation error $e_N$ of PM, QSM and PAM. \Cref{tab:error} shows $e_N$ of three algorithms as discrete points increase. \Cref{fig:error} gives a visual image to show the convergence rate.
For the PAM, we only present these results when $L=17,\,72,\,305,\, 1292$.
The approximation error of PAM consists of the quasiperiodic approximation error determined by the Diophantine approximation error $\{L\sqrt{5}\}$, and the numerical discrete error of solving periodic Schr\"{o}dinger system \eqref{eq:approxSE}.
The quasipepriodic approximation error is mainly controlled by the Diophantine approximation error.
The numerical discrete error is dependent on the discrete points. 
Once $L$ is fixed, the discrete points achieve a critical value, then $e_N$ of PAM cannot decrease, as shown in \Cref{tab:error}.
Therefore, $e_N$ of the PAM is mainly determined by the quasiperiodic approximation error. Theoretically, $e_N$ of the PAM can decrease by choosing a large and reasonable $L$. However the resulting computational cost could be unbearable. More significantly, $L$ cannot go to infinity in the numerical computation. As a result, the quasiperiodic approximation error cannot be avoided.
\Cref{tab:error} also shows that QSM and PM both have exponentially convergent rates in solving TQSE, consistent with the error estimates in \Cref{sec:error}. 
Besides, the aliasing error $\Vert R_N\psi\Vert_{\mathcal{L}^2(\bbR)}$ of PM is almost smaller than the level of $10^{-12}$, even for the $4\times 4$ grid.

\begin{table}[!hptb]
	\vspace{-0.2cm}
	\centering
	\footnotesize{
		\caption{Numerical error $e_N$ of PM, QSM and PAM for different $N$.}
		\vspace{0.001cm}
		\renewcommand\arraystretch{1.3}
		\setlength{\tabcolsep}{3mm}
		{\begin{tabular}{|c|c|c|c|c|c|}\hline
			$N$& 2   & 4 &8  &16 &32    \\ \hline
			PM  &4.132e-03  &7.569e-04 &2.543e-05 &1.702e-08 & 1.748e-12\\ \hline
			QSM &4.132e-03  &7.569e-04&2.543e-05 &1.702e-08 & 1.903e-12  \\ \hline
		    PAM ($L=17$) &1.907e-02 &1.900e-02 & 1.899e-02&1.899e-02&1.899e-02 \\ \hline
			PAM ($L=72$)&4.536e-03 &4.449e-03 & 4.449e-03& 4.449e-03& 4.449e-03 \\ \hline
			PAM ($L=305$) &1.376e-03 &1.052e-03 & 1.051e-03& 1.051e-03& 1.051e-03\\ \hline
			PAM ($L=1292$)&9.219e-04 &2.529e-04 & 2.480e-04& 2.480e-04& 2.480e-04\\ \hline
		\end{tabular}}
		\label{tab:error}
	}
\end{table}

\begin{figure}[!htbp]
	\centering
	\subfigure{
		\includegraphics[width=4.5in]{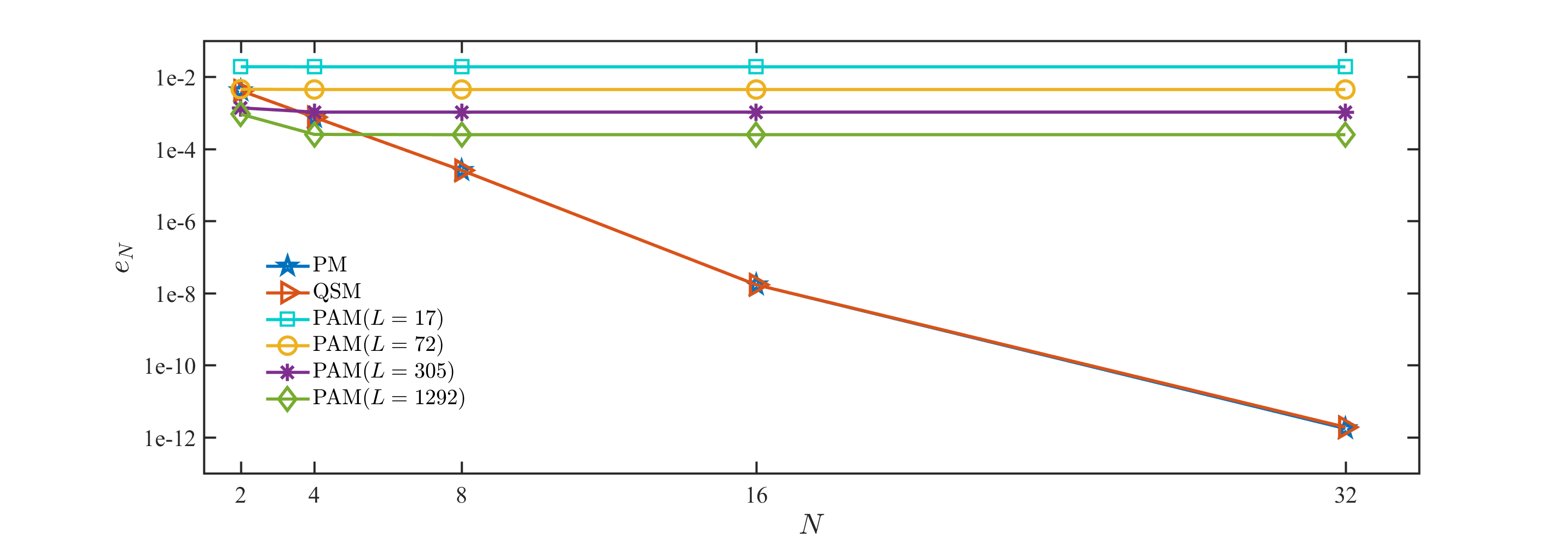}}
	\caption{The relationship between the numerical error $e_N$ and $N$.}\label{fig:error}
\end{figure}



We examine the efficiency of three methods by comparing CPU time in solving TQSE, as shown in \cref{tab:CPU}. These results demonstrate that the CPU time required by QSM increases dramatically with an increase of $N$ due to the invalidity of FFT. In contrast, the PM can greatly save computational amounts by using FFT. The CPU time of PAM has a similar behavior to PM due to the availability of FFT. However, the PAM is less efficient than PM since the PAM needs more discrete nodes.

\begin{table}[!hptb]
	\vspace{-0.2cm}
	\centering
	\footnotesize{
		\caption{Required CPU time (s) of PM, QSM and PAM for different $N$.}\label{tab:CPU}
		\vspace{0.001cm}
		\renewcommand\arraystretch{1.3}
		\setlength{\tabcolsep}{3mm}
		{\begin{tabular}{|c|c|c|c|c|c|}\hline
				$N$  & 2& 4 &8  &16 &32  \\ \hline
				PM   &0.051 & 0.077    &0.237    &0.716     & 2.873       \\ \hline
				QSM  &0.125 & 1.020  &13.366    &198.301    &3347.355 \\ \hline
	    PAM ($L=17$) &0.331 &0.593 &1.146&2.554&4.204 \\ \hline
		PAM ($L=72$) &0.994 &1.833 &3.741&7.382&15.947 \\ \hline
	   PAM ($L=305$) &6.497 &12.853 &27.451&64.089&109.709\\ \hline
	  PAM ($L=1292$) &28.625 &50.074 & 114.273&247.594&494.179 \\ \hline
		\end{tabular}}
	}
\end{table}

Finally, combining the data in Table \ref{tab:error} and Table \ref{tab:CPU}, we plot the relationship between $e_N$ and CPU time in Figure \ref{fig:errorcpu}. These results show that the PM is a high-precision and efficient algorithm in solving TQSE \cref{eq:SE}.


\begin{figure}[!htbp]
	\centering
	\subfigure{
		\includegraphics[width=4.5in]{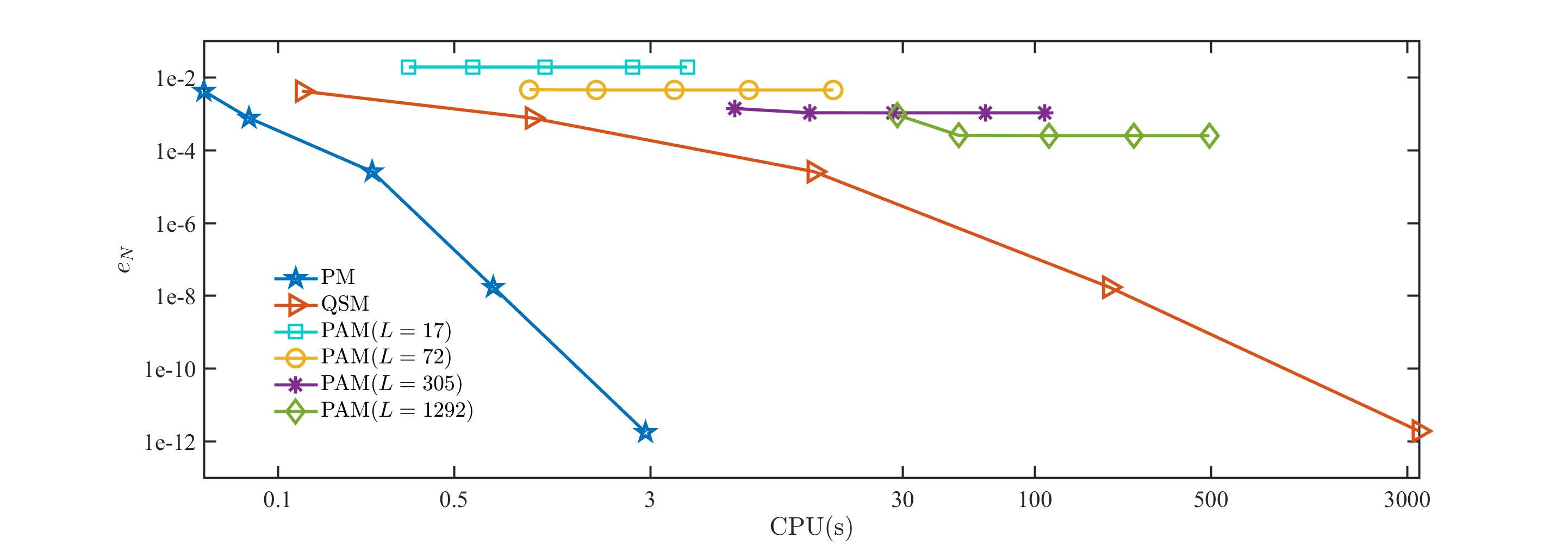}}
	\caption{The relationship between the numerical error $e_N$ and CPU time (s) when $N=2,4,8,16,32,$ respectively.}\label{fig:errorcpu}
\end{figure}


\section{Discussion and Conclusions}
\label{sec:conclusion}




In this paper, we present the convergence analysis of PM and QSM by revealing the relation between quasiperiodic functions and their parent functions. These results demonstrate that PM and QSM have exponential decay both in $\mathcal{L}^2(\bbR^d)$- and $\mathcal{L}^{\infty}(\bbR^d)$-norm, and QSM (PM) is an extension of periodic Fourier spectral (pseudo-spectral) method. We also analyze the computational complexity of these methods. The PM can use FFT while the QSM cannot. Finally, we adopt a one-dimensional TQSE to show the accuracy and efficiency of PM, QSM and PAM in solving quasiperiodic systems. Numerical results demonstrate that PM and QSM also have exponential convergence, while the approximation error of PAM is mainly dominated by Diophantine approximation error.  
These results show that the PM is an accurate and efficient method for solving quasiperiodic systems. It is the first theoretical work of the PM. 
This work encourages us to further investigate the error estimates of PM and QSM in general function space, as well as the development of advanced numerical methods and theories for solving more quasiperiodic systems.



\appendix
\begin{appendices}
	
\section{The proof of \cref{thm:truncation error-2}}
\label{Appendix:proofQSM}

\begin{proof}
For $\bk\in K^n_N$, it follows that $\Vert \bk\Vert_2\leq \sqrt{n}N$.
By Cauchy-Schwarz inequality and applying \cref{thm:object}, we have
	\begin{align*}
		\Vert\calP_N f - f\Vert^2_{\mathcal{L}^2(\bbR^d)}
		&= \sum_{\bk\in \bbZ^n/K^n_N} \vert \hf_{\bk}\vert^2
		\leq CN^{-2\alpha} \sum_{\bk\in \bbZ^n/K^n_N}
		\|\bk \|_2^{2\alpha} |\hf_{\bk}|^2\\
		&=CN^{-2\alpha} \sum_{\bk\in \bbZ^n/K^n_N}
		\|\bk \|_2^{2\alpha} |\hF_{\bk}|^2
		\leq CN^{-2\alpha}\vert F \vert^2_\alpha.
	\end{align*}
	This completes the proof.
\end{proof}

\section{Error analysis of QSM without the help of parent functions}
\label{Appendix:proofQSM2}
Here we present an approximation analysis of QSM in the quasiperiodic function space by imposing some assumptions on the projection matrix.
\begin{theorem}
Suppose that $f(\bx)\in H^{\alpha}_{QP}(\bbR^d)$ and the nonzero minimum singular value $\sigma_{\min}(\bm P)$ of the projection matrix $\bm P$ satisfies $\sigma_{\min}(\bm P)>\theta >0$. Then, there exists a constant $C(\theta)$, independent of $f$ and $N$, such that		
	\begin{align*}
		\|\mathcal{P}_N f-f\|_{\mathcal{L}^2(\bbR^d)} \leq C(\theta) N^{-\alpha}\vert f \vert_\alpha.
	\end{align*}
\end{theorem}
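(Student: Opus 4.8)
The plan is to argue entirely on the Fourier side, replacing the detour through the parent function used in \Cref{Appendix:proofQSM} by a direct spectral estimate in which the hypothesis $\sigma_{\min}(\bP)>\theta>0$ plays the role of a separation constant. By Parseval's identity \cref{eq:Parseval}, and because $\calP_N f$ keeps precisely the modes indexed by $K_N^n$, the truncation error collapses to a tail sum,
\[
\Vert \calP_N f-f\Vert^2_{\mathcal{L}^2(\bbR^d)}=\sum_{\bk\in\bbZ^n/K_N^n}\vert\hf_{\bk}\vert^2 ,
\]
so the whole task reduces to dominating this tail by $N^{-2\alpha}\vert f\vert_\alpha^2=N^{-2\alpha}\sum_{\bk\in\bbZ^n}\vert\bP\bk\vert^{2\alpha}\vert\hf_{\bk}\vert^2$.

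First I would record the elementary fact that any $\bk\notin K_N^n$ has a coordinate $k_j$ with $\vert k_j\vert\geq N$, whence $\Vert\bk\Vert_2\geq N$. The decisive ingredient is then a uniform lower bound of the shape $\vert\bP\bk\vert\geq\theta\Vert\bk\Vert_2$; granting it, I insert the factorization $1=\vert\bP\bk\vert^{2\alpha}\vert\bP\bk\vert^{-2\alpha}$ into each tail term, pull out the largest weight over the tail, and estimate
\[
\sum_{\bk\in\bbZ^n/K_N^n}\vert\hf_{\bk}\vert^2\leq\Big(\sup_{\bk\notin K_N^n}\vert\bP\bk\vert^{-2\alpha}\Big)\sum_{\bk\in\bbZ^n}\vert\bP\bk\vert^{2\alpha}\vert\hf_{\bk}\vert^2\leq(\theta N)^{-2\alpha}\vert f\vert_\alpha^2 .
\]
Taking square roots yields the claim with $C(\theta)=\theta^{-\alpha}$, up to the fixed $\ell^1$--$\ell^2$ norm-equivalence constant hidden in the paper's convention $\vert\bP\bk\vert=\sum_{j}\vert(\bP\bk)_j\vert$, which only rescales $C(\theta)$ by a factor depending on $d$.

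The crux, and the step I expect to be the main obstacle, is justifying $\vert\bP\bk\vert\geq\theta\Vert\bk\Vert_2$. This is immediate when $\bP$ has full column rank, but here $\bP\in\bbM^{d\times n}$ is a wide matrix with $n\geq d$, so it possesses a nontrivial real kernel, and the smallest singular value genuinely controls only $\bP^T$ acting on $\bbR^d$ rather than $\bP$ acting on $\bbR^n$; without further input, integer vectors $\bk$ may drift arbitrarily close to $\ker\bP$ and force $\vert\bP\bk\vert$ far below $\theta\Vert\bk\Vert_2$. The way I would close this gap is to read the hypothesis $\sigma_{\min}(\bP)>\theta$ as a quantitative Diophantine separation of the lattice $\bP\bbZ^n$ from the origin and show that it guarantees $\vert\bP\bk\vert\geq\theta N$ on the relevant index set $\bbZ^n/K_N^n$, which is exactly the point where the $\bbQ$-linear independence of the columns of $\bP$ built into $\bbM^{d\times n}$ and the finite-band structure of $K_N^n$ must be combined. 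Isolating and proving this separation lemma is the only nonroutine part; the remainder is the two displayed lines above.
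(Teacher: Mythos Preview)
Your approach is precisely that of the paper: use Parseval to reduce to the tail $\sum_{\bk\notin K_N^n}\vert\hf_{\bk}\vert^2$ and bound each term via $\vert\hf_{\bk}\vert^2=\vert\bP\bk\vert^{-2\alpha}\cdot\vert\bP\bk\vert^{2\alpha}\vert\hf_{\bk}\vert^2$ together with a lower bound on $\vert\bP\bk\vert$ over the tail. The paper's proof is a single display asserting $\sum_{\blam\notin\bLam_N^d}\vert\hf_{\blam}\vert^2\leq C(\sigma_{\min}(\bP)N)^{-2\alpha}\sum\|\blam\|_2^{2\alpha}\vert\hf_{\blam}\vert^2$ without further comment, implicitly invoking exactly the inequality $\|\bP\bk\|_2\gtrsim\sigma_{\min}(\bP)N$ for $\bk\notin K_N^n$ that you isolate as the crux.

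Your misgivings about that step are well founded, and the separation lemma you propose to rescue it is false in general. Take $d=1$, $n=2$, $\bP=(1,\sqrt{2})$, whose sole nonzero singular value is $\sqrt{3}$. For the continued-fraction convergents $p_m/q_m$ of $\sqrt{2}$ set $\bk=(-p_m,q_m)$; then $\vert\bP\bk\vert=\vert q_m\sqrt{2}-p_m\vert<1/q_m$ while $\|\bk\|_2>q_m$. With $N=q_m$ one has $\bk\notin K_N^2$ yet $\vert\bP\bk\vert<1/N$, not $\geq\theta N$. The $\bbQ$-linear independence of the columns of $\bP$ yields only injectivity of $\bP$ on $\bbZ^n$, not a quantitative lower bound, and the singular-value hypothesis controls $\bP$ only on its $d$-dimensional row space, saying nothing about integer vectors close to $\ker\bP$. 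So what you flagged is not a routine verification left to the reader but a genuine gap; the paper's own argument passes over the same point without addressing it.
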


\begin{proof}
	For $\bk\in K^n_N$, it follows that $\Vert \bk\Vert_2\leq \sqrt{n}N$.
	By Cauchy-Schwarz inequality, we have
	\begin{align*}
		&\Vert \mathcal{P}_N f-f\Vert^2_{\mathcal{L}^2(\bbR^d)}
		= \sum_{\bk\in \bbZ^n/K^n_N} \vert \hat f_{\bk}\vert^2
		= \sum_{\bm\lambda\in \bm\Lambda /\bm\Lambda^d_N} \vert \hat f_{\bm\lambda}\vert^2\\
		&\leq  C(\sigma_{\min}(\bm P) N)^{-2\alpha} \sum_{\bm\lambda\in \bm\Lambda /\bm\Lambda^d_N}
		\|\bm\lambda \|_2^{2\alpha} |\hat f_{\bm\lambda}|^2
		\leq C(\theta)N^{-2\alpha}\vert f \vert^2_\alpha.
	\end{align*}
	This completes the proof.
\end{proof}

\begin{theorem}
	Suppose that $f(\bx)\in H^{\alpha}_{QP}(\bbR^d)$, that the nonzero minimum singular value $\sigma_{\min}(\bm P)$ of the projection matrix $\bm P$ satisfies $\sigma_{\min}(\bm P)>\theta >0$ and $\alpha>q>d/2$.
	Then, there exists a
	constant $C(\theta)$, independent of $f$ and $N$, such that
	\begin{align*}
		\|\mathcal{P}_N f-f\|_{\mathcal{L}^{\infty}(\bbR^d)} \leq C(\theta) N^{q-\alpha}\vert f \vert_\alpha.
	\end{align*}
\end{theorem}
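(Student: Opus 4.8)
The plan is to mirror the proof of the $\mathcal{L}^{\infty}$-estimate in \cref{thm:truncation error}, but to carry out every step directly in the quasiperiodic frequency space, using the quantitative non-degeneracy $\sigma_{\min}(\bP)>\theta$ in place of the detour through the parent function. First I would start from the spectral tail: since $f\in H^{\alpha}_{QP}(\bbR^d)$ its Fourier series converges absolutely, so $\calP_N f-f=-\sum_{\bk\in\bbZ^n/K_N^n}\hf_{\bk}e^{i\blam_{\bk}^T\bx}$ and hence $\Vert\calP_N f-f\Vert_{\mathcal{L}^{\infty}(\bbR^d)}\leq\sum_{\bk\in\bbZ^n/K_N^n}\vert\hf_{\bk}\vert=\sum_{\blam\in\bLam/\bLam_N^d}\vert\hf_{\blam}\vert$, exactly as in the $\ell^{1}$-reduction used for the parent-function version.

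Next I would split this sum by the Cauchy--Schwarz inequality against the weight $(1+\Vert\blam\Vert_2^2)^{q}$:
\[
\sum_{\blam\in\bLam/\bLam_N^d}\vert\hf_{\blam}\vert
\leq\Big(\sum_{\blam\in\bLam/\bLam_N^d}(1+\Vert\blam\Vert_2^2)^{-q}\Big)^{1/2}
\Big(\sum_{\blam\in\bLam/\bLam_N^d}(1+\Vert\blam\Vert_2^2)^{q}\vert\hf_{\blam}\vert^2\Big)^{1/2}.
\]
For the second factor I would invoke the singular-value bound as in the preceding $\mathcal{L}^2$ estimate: for $\bk\in\bbZ^n/K_N^n$ one has $\Vert\bk\Vert_2\geq N$, whence $\Vert\blam_{\bk}\Vert_2=\Vert\bP\bk\Vert_2\geq\sigma_{\min}(\bP)\Vert\bk\Vert_2\geq\theta N$. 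Because $q<\alpha$, this yields $(1+\Vert\blam\Vert_2^2)^{q}\leq(\theta N)^{2(q-\alpha)}(1+\Vert\blam\Vert_2^2)^{\alpha}$, and after passing from $(1+\Vert\blam\Vert_2^2)^{\alpha}$ to $\Vert\blam\Vert_2^{2\alpha}$ (legitimate up to a fixed dimensional constant once $\theta N\geq 1$, also absorbing the equivalence between $\Vert\blam\Vert_2$ and $\vert\blam\vert$) the second factor is controlled by $C(\theta)N^{q-\alpha}\vert f\vert_\alpha$.

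The hard part will be the first factor, namely showing that $\sum_{\blam\in\bLam/\bLam_N^d}(1+\Vert\blam\Vert_2^2)^{-q}$ is finite and bounded uniformly in $N$. This is delicate precisely because $\bLam=\bP\bbZ^n$ is dense in $\bbR^d$ when $n>d$, so the series cannot simply be compared to a $d$-dimensional lattice sum. My plan is to lift it back to $\bbZ^n$, writing it as $\sum_{\bk\in\bbZ^n/K_N^n}(1+\Vert\bP\bk\Vert_2^2)^{-q}$, and to apply $\Vert\bP\bk\Vert_2\geq\theta\Vert\bk\Vert_2$ once more, reducing it to $\theta^{-2q}\sum_{\bk\in\bbZ^n}(1+\Vert\bk\Vert_2^2)^{-q}$, which is convergent and $N$-independent. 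Combining the two factors then gives the claimed estimate with $C(\theta)$ absorbing $\theta^{-q}$, the factor $2^{\alpha/2}$, and the value of the convergent series. I expect the genuinely subtle point to be the uniform validity of the lower bound $\Vert\bP\bk\Vert_2\geq\sigma_{\min}(\bP)\Vert\bk\Vert_2$ over $\bk\in\bbZ^n/K_N^n$: for $n>d$ the matrix $\bP$ has a nontrivial kernel, so this inequality (and the resulting dimension count for the convergence of the weight series) is exactly where the arithmetic structure of the spectrum $\bLam$ and the $\bbQ$-linear independence of the columns of $\bP$ must carry the argument, rather than the bare singular value alone.
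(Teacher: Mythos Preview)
Your outline tracks the paper's argument almost step for step: the paper also passes to $\sum_{\blam\in\bLam/\bLam_N^d}|\hf_{\blam}|$, applies Cauchy--Schwarz with the weight $\|\blam\|_2^{2q}$ (the bare power rather than your $(1+\|\blam\|_2^2)^{q}$, a cosmetic difference), extracts the factor $[\sigma_{\min}(\bP)N]^{q-\alpha}$ from the coefficient sum, and bounds what remains by $|f|_\alpha$. The one substantive divergence is in how the weight factor is handled: the paper does \emph{not} lift back to $\bbZ^n$ but simply asserts that $\sum_{\blam\in\bLam/\bLam_N^d}\|\blam\|_2^{-2q}<\infty$ once $q>d/2$. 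Your lifting route leads instead to $\sum_{\bk\in\bbZ^n}(1+\|\bk\|_2^2)^{-q}$, and that series converges only for $q>n/2$, not for the weaker hypothesis $q>d/2$ stated in the theorem --- so even if the singular-value step held, your version would not recover the result as written.

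The doubt you raise at the end is on target, and it is not resolved in the paper either. For $n>d$ the matrix $\bP$ has a nontrivial real kernel, so $\|\bP\bk\|_2\geq\sigma_{\min}(\bP)\|\bk\|_2$ fails in general; $\bbQ$-linear independence of the columns keeps nonzero integer vectors out of the kernel but does not prevent them from coming arbitrarily close to it, so $\|\bP\bk\|_2/\|\bk\|_2$ can be made arbitrarily small. The paper uses exactly this lower bound, tacitly, to obtain $\|\blam\|_2\geq c\,\sigma_{\min}(\bP)N$ for $\bk\notin K_N^n$ (and likewise in the companion $\mathcal{L}^2$ estimate of the same appendix), and offers no further argument for the convergence of the weight series under $q>d/2$. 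So your plan coincides with the paper's, and the difficulty you single out is a genuine gap shared by both.
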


\begin{proof}
	Applying Cauchy-Schwarz inequality, we obtain
	\begin{align*}
		& \Vert \mathcal{P}_N f- f \Vert_{\mathcal{L}^{\infty}(\bbR^d)}
		=\sup_{\bx\in \bbR^n}\bigg\vert 
		\sum_{\bk\in \bbZ^n/K_N^n} \hat f_{\bk}e^{i (\bm P\bk)^T \bx}\bigg\vert
		\leq \sum_{\bk\in \bbZ^n/K_N^n} \vert \hat f_{\bk}\vert
		= \sum_{\bm\lambda\in \bm\Lambda /\bm\Lambda^d_N} \vert \hat f_{\bm\lambda}\vert\\
		&\leq \bigg(\sum_{\bm\lambda\in \bm\Lambda /\bm\Lambda^d_N}
		\|\bm\lambda \|_2^{-2q} \bigg)^{1/2}
		\bigg(\sum_{\bm\lambda\in \bm\Lambda /\bm\Lambda^d_N}
		\|\bm\lambda \|_2^{2q} |\hat f_{\bm\lambda}|^2\bigg)^{1/2}\\
		&= \bigg(\sum_{\bm\lambda\in \bm\Lambda /\bm\Lambda^d_N}
		\|\bm\lambda \|_2^{-2q} \bigg)^{1/2}
		\bigg(\sum_{\bm\lambda\in \bm\Lambda /\bm\Lambda^d_N}
		\|\bm\lambda \|_2^{2q-2\alpha}
		\|\bm\lambda \|_2^{2\alpha} |\hat f_{\bm\lambda}|^2\bigg)^{1/2}\\
		&\leq C[\sigma_{min}(\bm P) N]^{q-\alpha} \bigg(\sum_{\bm\lambda\in \bm\Lambda /\bm\Lambda^d_N}
		\|\bm\lambda \|_2^{-2q} \bigg)^{1/2}
		\bigg(\sum_{\bm\lambda\in \bm\Lambda /\bm\Lambda^d_N}
		\|\bm\lambda \|_2^{2\alpha} |\hat f_{\bm\lambda}|^2\bigg)^{1/2}\\
		&\leq C[\sigma_{min}(\bm P) N]^{q-\alpha} \bigg(\sum_{\bm\lambda\in \bm\Lambda /\bm\Lambda^d_N}
		\|\bm\lambda \|_2^{-2q} \bigg)^{1/2}
		\bigg(\sum_{\bm\lambda\in \bm\Lambda}
		\vert \bm\lambda \vert^{2\alpha} |\hat f_{\bm\lambda}|^2\bigg)^{1/2}\\
		&= C(\theta) N^{q-\alpha}\vert f \vert_\alpha.
	\end{align*}
	The last inequality holds due to $\sum_{\bm\lambda\in \bm\Lambda /\bm\Lambda^d_N}
	\|\bm\lambda \|_2^{-2q}<\infty$ when $q>d/2$.
\end{proof}

\section{Another proof of \cref{thm:pm error-2}}
\label{Appendix:proofPM}
According to the definition of $\mathcal{L}^2(\bbR^d)$-norm, we have
\begin{align*}
	\Vert f-I_Nf\Vert^2_{\mathcal{L}^2(\bbR^d)}
	&=\sum_{\blam_{\bk} \in \bLam/\bLam_N^d} \vert \hf_{\bk} \vert^2
	+\sum_{\blam_{\bk}\in \bLam_N^d} \Big\vert \sum_{\bmm\in\bbZ^n_*} \hf_{\bk+2N\bmm}\Big\vert^2 \\
	&=\Vert f-\calP_N f\Vert^2_{\mathcal{L}^2(\bbR^d)}+\Vert R_Nf\Vert^2_{\mathcal{L}^2(\bbR^d)}.
\end{align*}

Recall that $\Vert \bk\Vert_2^2=\sum^n_{j=1}\vert k_j\vert^2$
and by Cauchy-Schwarz inequality, we have
\begin{align*}
	\Big\vert \sum_{\bmm\in\bbZ^n_*} \hf_{\bk+2N\bP\bmm}\Big\vert^2
	&=\Big\vert \sum_{\bmm\in\bbZ^n_*} \hF_{\bk+2N\bmm}\Big\vert^2\\
	&=\Big\vert \sum_{\bmm\in\bbZ^n_*} (1+\Vert\bk+2N\bmm \Vert_2^2)^{-\frac{\alpha}{2}}\cdot
	(1+\Vert\bk+2N\bmm \Vert_2^2)^{\frac{\alpha}{2}}\, \hF_{\bk+2N\bmm}\Big\vert^2\\
	&\leq \sum_{\bmm\in\bbZ^n_*} (1+\Vert\bk+2N\bmm \Vert_2^2)^{-\alpha}\cdot
	\sum_{\bmm\in\bbZ^n_*}
	(1+\Vert\bk+2N\bmm \Vert_2^2)^{\alpha} \,\vert\hF_{\bk+2N\bmm}\vert^2.
\end{align*}
Since $\vert k_j\vert\leq N,~j=1,\cdots, n$, for $\vert m_j \vert\geq 1$, it follows that
\begin{align*}
	\vert k_j+2N m_j\vert\geq 2N\vert m_j\vert-\vert k_j\vert \geq (2\vert m_j \vert-1)N>1.
\end{align*}
Thus, for $\bmm \in\bbZ^n$ with $\vert m_j \vert\geq 1$, we have 
\begin{align*}
	&(1+\Vert\bk+2N\bmm \Vert_2^2)^{-\alpha}
	=\Big[1+\sum^n_{j=1}\vert k_j+2N m_j\vert^2\Big]^{-\alpha}\\
	&\leq \Big[1+\sum^n_{j=1}((2\vert m_j \vert-1)N)^2\Big]^{-\alpha}
	\leq N^{-2\alpha}\Big[\sum^n_{j=1}(2\vert m_j \vert-1)^2\Big]^{-\alpha}.
\end{align*}
Then
\begin{align*}
	\sum_{\bmm\in\bbZ^n_*}(1+\Vert\bk+2N\bmm \Vert_2^2)^{-\alpha}
	&\leq N^{-2\alpha}\sum^{n}_{r=1}2^rC^r_n\sum^{+\infty}_{m_1=1} \cdots \sum^{+\infty}_{m_r=1}
	\Big[\sum^r_{j=1}(2\vert m_j \vert-1)^2\Big]^{-\alpha}.
\end{align*}
When $\alpha>r/2$, the series $\sum^{+\infty}_{m_1=1} \cdots \sum^{+\infty}_{m_r=1}
\Big[\sum^r_{j=1}(2\vert m_j \vert-1)^2\Big]^{-\alpha}$ converges.
For $\alpha>n/2$, we have
\begin{align*}
	S: =\sum_{\bmm\in\bbZ^n_*}\Big[\sum^d_{j=1}(2\vert m_j \vert-1)\Big]^{-\alpha}<\infty.
\end{align*}
Therefore, 
\begin{align*}
	\Vert R_Nf\Vert^2_{\mathcal{L}^2(\bbR^d)}
	&=\sum_{\bk\in K^n_N} \Big\vert \sum_{\bmm\in\bbZ^n_*} \hF_{\bk+2N\bmm}\Big\vert^2\\
	&\leq  N^{-2\alpha} S\cdot
	\sum_{\bk\in K^n_N}
	\sum_{\bmm\in\bbZ^n_*}
	(1+\Vert\bk+2N\bmm \Vert_2^2)^{\alpha} \,\vert\hF_{\bk+2N\bmm}\vert^2\\
	&\leq N^{-2\alpha} S \cdot 2^\alpha
	\sum_{\bk\in K^n_N} \sum_{\bmm\in\bbZ^n_*}
	\Vert\bk+2N\bmm \Vert_2^{2\alpha} \,\vert\hF_{\bk+2N\bmm}\vert^2\\
	&\leq 2^{\alpha} N^{-2\alpha} S\vert F\vert^2_\alpha.
\end{align*}
Applying \cref{lem:trun-periodic}, yields
\begin{align*}
	\Vert f-I_Nf\Vert_{\mathcal{L}^2(\bbR^d)}\leq CN^{-\alpha}\vert F\vert_\alpha.
\end{align*}

\section{Fully discrete scheme of TQSE \cref{eq:SE}}
\label{appendix:fullydiscre}
We apply the OS2 method to solving semi-discrete equations \cref{eq:SE:QSM} and \cref{eq:PMSE} in time direction. Meanwhile, we present the implementation details of PAM to solve TQSE \cref{eq:SE}. 
Let $\tau$ be the time size and the $m$-th time iteration step $t_m=m\tau$.

\subsection{Fully discrete scheme using the QSM}
\label{subappendix:QSM}

From $t_m$ to $t_{m+1}$, the OS2 scheme consists of three steps to solving \cref{eq:SE:QSM}.
 
Step 1: Consider the following ordinary differential equation for $t\in [t_m, t_m+ \tau/2]$, 
\begin{align}
	i\frac{d \hat{\psi}_{\beta}(t)}{dt}
	=\frac{1}{2}\vert\beta \vert^2\hat{\psi}_{\beta}(t),
	\label{subeq:operator1} 
\end{align}
with initial value $\hat{\psi}_{\beta}(t_m)$.
We can analytically solve \cref{subeq:operator1} and obtain
\begin{align}
	\hat{\phi}_{\beta}(t_{m})
	=\hat{\psi}_{\beta}(t_{m}+\frac{\tau}{2})=e^{-(i\beta^2 \tau)/4}\hat{\psi}_{\beta}(t_m).
	\label{eq:operator1-solution}
\end{align}

Step 2: Consider \Cref{subeq:operator2-F} for $t\in [t_m, t_{m+1}]$, 
\begin{align}
	i\frac{d \hat{\psi}_{\beta}(t)}{dt}
	=\sum_{\lambda\in \bLam_N}\hat{v}_{\beta-\lambda} \hat{\psi}_{\lambda}(t) := g(t, \hat{\psi}_\beta(t)),
	\label{subeq:operator2-F}
\end{align}
with initial value $\hat{\phi}_{\beta}(t_{m})$.
To address the convolution term, we apply the 
fourth-order Runge Kutta (RK4) method to solve \cref{subeq:operator2-F} in the reciprocal space.
Concretely, let $k_1= g(t_m, \hat\phi_\beta(t_m))$, $k_2 =  g(t_m+\tau/2, \hat{\phi}_{\beta}(t_m)+ \tau k_1/2)$, $k_3 = g(t_m+\tau/2, \hat{\phi}_{\beta}(t_m)+ \tau k_2/2)$, $k_4=g(t_m+\tau, \hat{\phi}_{\beta}(t_m)+ \tau k_3)$, then  
$\hat{\phi}_{\beta}(t_{m+1})=\hat{\phi}_{\beta}(t_{m})+\tau (k_1+2k_2+2k_3+k_4)/6$.

Step 3: Still consider \Cref{subeq:operator1} but with initial value $\hat{\phi}_{\beta}(t_{m+1})$ for $t\in [t_m+\tau/2, t_{m+1}]$,  then we can obtain $\hat{\psi}_{\beta}(t_{m+1})$ analytically.

Here we analyze the computational complexity for each time step. In Steps 1 and 3, the QSM can analytically solve \cref{subeq:operator1}, resulting in $D$ multiplication operators, respectively. In Step 2, due to the RK4 scheme and convolution summations, there are $8D^2+14D$ operators. 
Therefore, the computational complexity of QSM in solving \cref{eq:SE} is the level of $O(D^2)$.

\subsection{Fully discrete scheme using the PM}
\label{subappendix:PM}

Also, from $t_m$ to $t_{m+1}$, the OS2 scheme contains three steps in solving \Cref{eq:PMSE}. The Step 1 and Step 3 are similar to \Cref{subappendix:QSM}. In Step 2, we can calculate the convolution terms of \cref{eq:PMSE} by using two-dimensional FFT, we obtain
\begin{align*}
	\Phi(\by_{\bj},t_m)
	=\sum_{\bk \in K^2_N} 
	\tPhi_{\bk}(t_{m})e^{i\bk^T \by_{\bj}},
\end{align*}
where $\tPhi_{\bk}(t_{m})$ is obtained by Step 1.
Consider equation for $t\in [t_m, t_{m+1}]$
\begin{align}
	i\Psi_t=V(\by_{\bj})\Psi(\by_{\bj},t):=w(t,\Psi(\by_{\bj},t)),
	\label{eq:PM-step2}
\end{align}
where the initial value is $\Phi(\by_{\bj},t_m)$, $V(\by)$ is the parent function corresponding to $v(x)$. To make a fair comparison with QSM, we still use RK4 to solve \cref{eq:PM-step2} in physical space. Let $k_1= w(t_m,\Phi(\by_{\bj},t_m))$, $k_2 =  w(t_m+\tau/2, \Phi(\by_{\bj},t_m)+ \tau k_1/2)$, $k_3 = w(t_m+\tau/2, \Phi(\by_{\bj},t_m)+ \tau k_2/2)$, $k_4=w(t_m+\tau, \Phi(\by_{\bj},t_m)+ \tau k_3)$, then  
$\Phi(\by_{\bj},t_{m+1})=\Phi(\by_{\bj},t_{m})+\tau (k_1+2k_2+2k_3+k_4)/6$.
Again using FFT, we obtain $\tphi_{\beta}(t_{m+1})= \langle \Phi, e^{i \bk^T \by_{\bj}} \rangle_N$.


Next, we analyze the computational complexity of each time step. Similarly, the differential systems in Steps 1 and 3 can be analytically solved in the reciprocal space, resulting in $D$ multiplication operators, respectively. In Step 2, due to the availability of FFT, the convolutions in \cref{eq:PMSE} can be economically calculated in physical space as dot product as shown in \cref{eq:PM-step2} which rises $O(D\log D)$ operators. 
Therefore, the computational complexity of PM in solving \cref{eq:SE} is the level of $O(D\log D)$.



\subsection{Implementation of PAM of solving TQSE \cref{eq:SE}}
\label{subappendix:PAM}

We give the implementation of PAM to solve TQSE \cref{eq:SE}. 
In the PAM, we use a one-dimensional periodic Schr\"{o}dinger equation (PSE) to approximate TQSE \cref{eq:SE} over a finite region $[0,2\pi L)$. Concretely, we use the periodic functions $u(x)$ and $\varphi(x,t)$ to approximate $v(x)$ and $\psi(x,t)$, respectively. Denote
\begin{align*}
\Lambda(u)=\{h\in\bbZ: h=[ L\lambda ],~ \lambda\in \Lambda_1\},
\end{align*}
then
\begin{align*}
u(x)=\sum_{h\in \Lambda(u)}\hat{u}_{h}e^{i h x},~x\in [0,2\pi L),
\end{align*}
where $\hat{u}_{h}=\hat{u}_{[ L\lambda ]}=\hat{v}_{\lambda}=1$.
Therefore, the PAM solves the one-dimensional PSE
\begin{align}
	i\frac{d \varphi(x,t)}{dt}=-\frac{1}{2}\frac{\partial^2 \varphi(x,t)}{\partial^2 x}+u(x)\varphi(x,t),~~(x,t)\in [0,2\pi L)\times [0,T],
\label{eq:approxSE}
\end{align}
where the initial periodic function $\varphi_0(x)$ is the approximate periodic function of $\psi_0(x)$. We use the periodic Fourier pseudo-spectral method and the OS2 method to discretize \cref{eq:approxSE} in space and time directions, respectively. 
Since the PAM can use one-dimensional FFT to solve the equation \eqref{eq:approxSE} and the number of grid points is $2ML$, then the computational complexity is $O((2ML)\log (2ML))$ of each time step.

\end{appendices}

\bibliography{mybibfile}
\end{document}